\newcommand{\Z}{\mathbb{Z}}
\newcommand{\Q}{\mathbb{Q}}
\newcommand{\N}{\mathbb{N}}
\newcommand{\R}{\mathbb{R}}
\newcommand{\SL}{{\rm SL}}
\newcommand{\diag}{{\rm diag}}
\newtheorem{prop}{Proposition}[section]
\newtheorem{conj}[prop]{Conjecture}
\newtheorem{thm}[prop]{Theorem}
\newtheorem{cor}[prop]{Corollary}
\newtheorem{lem}[prop]{Lemma}
\theoremstyle{definition}
\newtheorem{rem}{Remark}[section]
\title[the number of integers in Littlewood's conjecture]{An improvement of the lower bound of the number of integers in Littlewood's conjecture}
\author{Shunsuke Usuki}
\date{}
\begin{document}
	
	\maketitle
	
	\begin{abstract}
		In this paper, we improve the results in the author's previous paper \cite{Usu22},
		which deals with the quantitative problem on Littlewood's conjecture. We show that, for any $0<\gamma<1$, any $(\alpha,\beta)\in\R^2$ except on a set with Hausdorff dimension about $\sqrt{\gamma}$, any small $0<\varepsilon<1$ and any large $N\in\N$, the number of integers $n\in[1,N]$ such that
		$n\langle n\alpha\rangle\langle n\beta\rangle<\varepsilon$ is greater than $\gamma(\log N)^2/(\log\log N)^2$ up to a universal constant.
	\end{abstract}
	
	\section{Introduction and the main results}
	
	{\it Littlewood's conjecture} is the following famous and long-standing problem in simultaneous Diophantine approximation.
	
	\begin{conj}[Littlewood (c. 1930)]\label{LittlewoodConjecture}
		For every $(\alpha,\beta)\in\R^2$,
		\begin{equation}\label{Littlewood_liminf}
			\liminf_{n\to\infty}n\langle n\alpha\rangle\langle n\beta\rangle=0,
		\end{equation}
		where $\langle x\rangle=\min_{n\in\Z}|x-n|$ for $x\in\R$.
	\end{conj}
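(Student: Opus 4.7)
The plan is to pass to the standard dynamical reformulation and attack via measure rigidity on the space of lattices. To $(\alpha,\beta)\in\R^2$ associate the unimodular lattice
\[ \Lambda_{\alpha,\beta}=\begin{pmatrix}1&\alpha&\beta\\0&1&0\\0&0&1\end{pmatrix}\Z^3\subset\R^3, \]
and let the full diagonal group $A=\{a_{s,t}=\diag(e^{s+t},e^{-s},e^{-t}):s,t\in\R\}$ act on $X=\SL_3(\R)/\SL_3(\Z)$. By Minkowski's theorem, the $A$-orbit of $\Lambda_{\alpha,\beta}$ is bounded in $X$ if and only if $\inf_{n\ge 1}n\langle n\alpha\rangle\langle n\beta\rangle>0$, so \eqref{Littlewood_liminf} is equivalent to the dynamical statement that no $A$-orbit on $X$ is bounded.

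I would argue by contradiction. Suppose $(\alpha,\beta)$ violates \eqref{Littlewood_liminf}, so $A\Lambda_{\alpha,\beta}$ is contained in some compact $K\subset X$. Averaging the orbit over the boxes $[0,T]^2$ as $T\to\infty$ and extracting a weak-$*$ subsequential limit produces an $A$-invariant Borel probability measure $\mu$ supported in $K$. By ergodic decomposition I may assume $\mu$ is also $A$-ergodic. Now I would apply the Einsiedler-Katok-Lindenstrauss measure classification: any $A$-invariant, $A$-ergodic probability measure on $X$ that has positive entropy for some $a\in A$ must be the $\SL_3(\R)$-invariant Haar measure. Since Haar measure is not supported on any compact proper set, the conjecture would follow once one knew that $\mu$ has positive entropy for some $a\in A$.

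The main obstacle is producing that positive entropy. A priori $\mu$ could live on a union of compact orbits of a one-parameter subgroup, or on some other low-dimensional $A$-invariant locus, and then every element of $A$ would act with zero entropy; such zero-entropy $A$-invariant measures on $X$ really do exist (for instance, Haar measures on closed $A$-orbits coming from orders in totally real cubic fields), so one cannot rule them out abstractly on $X$. The real fight is to show that the specific limit $\mu$ coming from the construction above cannot be one of them. The approach I would attempt is to exhibit non-atomicity of the conditional measures of $\mu$ along some expanding horospherical leaf of $a_{s,t}$: if these conditionals are non-atomic, the Ledrappier-Young formula forces positive entropy, and EKL then closes the argument. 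To force non-atomicity I would try to use the fact that $\Lambda_{\alpha,\beta}$ sits on the unstable leaf of $\Z^3$ generated by the top row of the defining matrix, and exploit the interaction of this leaf with the $A$-action to show that any atom in the conditionals would force an algebraic relation on $(\alpha,\beta)$ which one could hope to exclude, then upgrade via Lindenstrauss's high-entropy/invariance machinery. It is at precisely this non-atomicity step that every known strategy stalls, and bridging it is what a proof of Conjecture~\ref{LittlewoodConjecture} would require.
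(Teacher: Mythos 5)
You have not given a proof, and you say as much yourself: the statement in question is Littlewood's conjecture, which is an open problem, and the paper does not prove it either --- it is recorded as Conjecture~\ref{LittlewoodConjecture} precisely because no proof is known. The paper's actual results (Theorem~\ref{mainthm}, Corollary~\ref{maincor}, building on \cite{EKL06} and \cite{Usu22}) are quantitative statements valid for all $(\alpha,\beta)$ outside an exceptional set of small (respectively zero) Hausdorff dimension; they do not, and are not claimed to, settle \eqref{Littlewood_liminf} for every $(\alpha,\beta)$. Your outline reproduces the standard dynamical strategy behind those results: pass to the lattice $\tau_{\alpha,\beta}\Z^3$ in $X=\SL(3,\R)/\SL(3,\Z)$, observe via Mahler/Minkowski that a counterexample yields a bounded diagonal orbit, extract an $A$-invariant limit measure from empirical averages, and invoke the Einsiedler--Katok--Lindenstrauss classification (Proposition~\ref{measure_rigidity_EKL06} here). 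The genuine gap is exactly the one you name: nothing in this construction forces the limit measure to have positive entropy for some $a\in A$, and zero-entropy $A$-invariant ergodic measures on $X$ (e.g.\ Haar measures on compact $A$-orbits arising from totally real cubic fields) genuinely exist, so the EKL theorem simply does not apply. No known argument establishes the non-atomicity of the relevant conditional measures for an arbitrary $(\alpha,\beta)$, and this is why EKL obtain only that the exceptional set has Hausdorff dimension zero rather than that it is empty. So the proposal is an accurate description of why the conjecture is hard, not a proof of it.

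Two secondary inaccuracies are worth flagging. First, the correct equivalence is with the orbit under the positive quadrant $A^+=\{a_{s,t}: s,t\geq 0\}$ (as used throughout the paper), not the full group $A$: the failure of \eqref{Littlewood_liminf} gives boundedness of the $A^+$-trajectory of $\tau_{\alpha,\beta}\Gamma$, and you cannot conclude that the entire $A$-orbit lies in a compact set, so your limit measure must be built from averages over $[0,T]^2\subset A^+$ (as in the definition of $\delta^T_{A^+,x}$ in the paper); one then still gets an $A$-invariant probability measure supported in the compact set, but the justification is slightly different from what you wrote. Second, your matrix $\begin{pmatrix}1&\alpha&\beta\\0&1&0\\0&0&1\end{pmatrix}$ is the transpose of the paper's $\tau_{\alpha,\beta}$ and corresponds to the dual form of the approximation problem; with your choice of $a_{s,t}$ the small lattice vectors do not directly encode $n\langle n\alpha\rangle\langle n\beta\rangle$, so the conventions need to be matched before Mahler's criterion gives the claimed equivalence. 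Neither of these repairs touches the central missing step, which remains open.
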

	
	In 2000's, Einsiedler, Katok and Lindenstrauss showed in \cite{EKL06} that the set of exceptions to Conjecture \ref{LittlewoodConjecture} is at most zero Hausdorff dimension. Furthermore, the author showed in \cite{Usu22} a quantitative version of this result.
	
	\begin{thm}[{\cite[Theorem 1.3]{Usu22}}]\label{old_maintheorem}
		For any $0<\gamma<1/2$, there exist a subset $Z(\gamma)\subset\R^2$ and constants $C>0$ and $0<\varepsilon_0<1$ independent of $\gamma$ such that
		$\dim_HZ(\gamma)\leq 15\sqrt{\gamma}$ and, for any $(\alpha,\beta)\in \R^2\setminus Z(\gamma)$ and any $0<\varepsilon<\varepsilon_0$,
		$$
		\liminf_{N\to\infty}\frac{1}{\log N}\left|\left\{n\in\{1,\dots,N\}\left|\ n\langle n\alpha\rangle\langle n\beta\rangle<\varepsilon\right.\right\}\right|\geq C\gamma\varepsilon.
		$$
	\end{thm}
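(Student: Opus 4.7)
The plan is to work in the homogeneous space $X_3 := \SL_3(\R)/\SL_3(\Z)$ with the one-parameter diagonal flow $a_t = \diag(e^{-2t}, e^t, e^t)$ and the unipotent element $u_{\alpha,\beta}$ whose action on $\Z^3$ takes $(n,-p,-q)$ to $(n, n\alpha-p, n\beta-q)$. The starting observation is the standard correspondence: the product $n\langle n\alpha\rangle\langle n\beta\rangle$ equals the coordinate-product of the vector $a_tu_{\alpha,\beta}(n,-p,-q)^T$, which is preserved by the flow. Thus $n\langle n\alpha\rangle\langle n\beta\rangle<\varepsilon$ for some integer $n\leq N$ is equivalent, up to harmless multiplicative constants, to the lattice $a_tu_{\alpha,\beta}\Z^3$ lying in the cuspidal region $\Omega_\varepsilon\subset X_3$ of lattices carrying a nonzero vector of coordinate-product less than $\varepsilon$, for some $t\in[0,\tfrac12\log N]$; the Haar volume of $\Omega_\varepsilon$ is of order $\varepsilon$ up to logarithmic factors.

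Given this reformulation, I would reduce the lower bound on the counting function to a lower bound on the amount of time the orbit spends in $\Omega_\varepsilon$: separating nearby visit-times at unit scale, each excursion into $\Omega_\varepsilon$ contributes a distinct admissible $n$, so it suffices to show that for $(\alpha,\beta)\notin Z(\gamma)$,
\[
\frac{1}{T}\int_0^T \mathbf{1}_{\Omega_\varepsilon}\bigl(a_tu_{\alpha,\beta}x_0\bigr)\,dt\;\geq\;C\gamma\varepsilon\quad\text{for all large }T.
\]
Following the philosophy of Einsiedler--Katok--Lindenstrauss \cite{EKL06}, such a lower bound should be producible whenever the empirical measures along the $a_t$-orbit --- extended to the full diagonal $A\cong\R^2$ by averaging along a short transversal orbit --- carry a definite amount of entropy (quantified by $\gamma$): the measure-rigidity theorem of \cite{EKL06} would then force any weak-$\ast$ limit of such enriched empirical measures to have a Haar component of mass at least $c\gamma$, and Haar measure of $\Omega_\varepsilon$ supplies the factor $\varepsilon$.

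The main task --- and the main obstacle --- is then to bound the Hausdorff dimension of the exceptional set $Z(\gamma)$ of parameters for which the enriched empirical measures have entropy $<\gamma$ along arbitrarily long trajectory segments. I would attack this by a multiscale covering argument: if the orbit of length $t$ is concentrated on an $a$-invariant subset of entropy $<\gamma$, then $(\alpha,\beta)$ must lie in a Bowen-type box in the parameter plane, and the number of such boxes needed to cover $Z(\gamma)$ at scale $e^{-t}$ is at most $\exp\bigl(O(\sqrt{\gamma})\cdot t\bigr)$. The exponent $\sqrt{\gamma}$, rather than $\gamma$, appears because the horospherical factor of the flow is two-dimensional and the entropy budget is distributed between the two unstable directions via an arithmetic--geometric-mean inequality; summing the covering over dyadic scales in $t$ yields $\dim_HZ(\gamma)\leq 15\sqrt{\gamma}$, with the constant $15$ absorbing the combinatorial overhead.

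I expect the hardest step to be the rigorous passage from the qualitative entropy statement of \cite{EKL06} to an effective parametric box-count for $Z(\gamma)$: the core difficulty is converting ``empirical entropy less than $\gamma$ along a single trajectory'' into a quantitative statement about the location of $(\alpha,\beta)$ in parameter space, which will require a careful pigeonholing of Bowen balls along the flow and a sharp accounting of the entropy contribution from each of the two unstable directions. Once this parametric entropy estimate is in hand, the count of Littlewood-good integers up to $N$ follows by unit-scale separation of cusp excursions, producing the stated lower bound $C\gamma\varepsilon\log N$.
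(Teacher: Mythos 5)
Your reformulation of the counting problem as a time-in-cusp question for a diagonal action on $\SL(3,\R)/\SL(3,\Z)$ is the right starting point, and the appeal to measure rigidity for high-entropy limits is indeed the engine of the argument. But two steps of the outline do not go through as stated.

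First, the claim that ``separating nearby visit-times at unit scale, each excursion into $\Omega_\varepsilon$ contributes a distinct admissible $n$'' is false. When $n\langle n\alpha\rangle\langle n\beta\rangle$ is extremely small for a single $n$, the orbit spends a long contiguous interval of time deep in the cusp with the \emph{same} vector (the one indexed by $n$) responsible for the whole visit, so unit-scale pigeonholing inside that interval overcounts by the length of the excursion. Consequently, a lower bound on $\tfrac1T\int_0^T \chi_{\Omega_\varepsilon}(a_t u_{\alpha,\beta}x_0)\,dt$ does not by itself give a lower bound on the number of integers. In the paper the bookkeeping is done instead with the Siegel transform of a \emph{bounded} region $\Delta_\varepsilon$ tessellated by the translates $a_{m,n}^{-1}\Delta_\varepsilon$ inside $\Omega_{T,\varepsilon}$, which yields the exact inequality $|\Lambda|\ge\sum_{m,n}\widehat{\chi_{\Delta_\varepsilon}}(a_{m,n}\tau_{\alpha,\beta}\Gamma)$ over the discrete two-parameter semigroup; this is one reason the paper works with $A^+$-empirical measures rather than a single one-parameter geodesic flow. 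Your proposed fix of ``averaging along a short transversal orbit'' to promote one-parameter empirical measures to $A$-invariant limits is not automatic and hides a genuine difficulty.

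Second, and more seriously, the proposal never treats the escape-of-mass alternative. If the weak-$*$ limit $\mu$ of the empirical measures has $\mu(X)<1$ (possibly $\mu(X)=0$), its Haar component can be arbitrarily small, and your entropy argument produces no lower bound at all. The paper's proof is explicitly a dichotomy: either the limit has nearly full mass and entropy $>\gamma$, in which case the Siegel-transform argument applies (Theorem~\ref{large_entropy}), or there is $\gamma$-escape of mass, in which case a purely combinatorial argument is used (Theorem~\ref{escape_of_mass}): one shows that a single triangle $d_{\varepsilon,n}$ covering area $>L$ of $[0,T]^2$ forces all multiples $kn$ with $k<e^{\sqrt{2L}/3}$ into the good set, so a long cusp excursion produces \emph{exponentially} many admissible integers rather than linearly many. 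Without this branch, the exceptional set $Z(\gamma)$ would have to absorb every parameter exhibiting escape of mass along some subsequence, which is far from a set of small Hausdorff dimension. Finally, the explanation of the exponent $\sqrt\gamma$ via an arithmetic--geometric-mean balance between the two unstable directions is speculative; the bound $15\sqrt\gamma$ comes from the separate effective covering estimate of \cite[Theorem~1.8]{Usu22}, which is an independent and delicate input, not a consequence of the measure-rigidity step.
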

	
	By taking $Z=\bigcap_{0<\gamma<1}Z(\gamma)$, we immediately obtain the following corollary.
	
	\begin{cor}[{\cite[Corollary 1.4]{Usu22}}]\label{old_maincorollary}
		There exist $Z\subset\R^2$ and $0<\varepsilon_0<1$ such that $\dim_H Z=0$ and, for any $(\alpha,\beta)\in \R^2\setminus Z$ and any $0<\varepsilon<\varepsilon_0$,
		$$
		\liminf_{N\to\infty}\frac{1}{\log N}\left|\left\{n\in\{1,\dots,N\}\left|\ n\langle n\alpha\rangle\langle n\beta\rangle<\varepsilon\right.\right\}\right|\geq C(\alpha,\beta)\varepsilon
		$$
		for some constant $C(\alpha,\beta)>0$ determined by $(\alpha,\beta)$.
	\end{cor}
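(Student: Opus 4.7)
The plan is to derive Corollary \ref{old_maincorollary} directly from Theorem \ref{old_maintheorem} by intersecting the exceptional sets $Z(\gamma)$ along a countable sequence $\gamma_k \to 0$, and then verifying the two required properties (Hausdorff dimension zero and the pointwise liminf bound).

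Concretely, I would fix any sequence $\gamma_k \in (0, 1/2)$ with $\gamma_k \to 0$ (for instance $\gamma_k = 1/(k+2)$), and set
\[
Z \;=\; \bigcap_{k=1}^{\infty} Z(\gamma_k).
\]
Because $Z \subset Z(\gamma_k)$ for every $k$, monotonicity of Hausdorff dimension combined with the bound $\dim_H Z(\gamma_k) \leq 15\sqrt{\gamma_k}$ from Theorem \ref{old_maintheorem} yields $\dim_H Z \leq 15\sqrt{\gamma_k}$ for every $k$; letting $k \to \infty$ then forces $\dim_H Z = 0$. It is crucial here that the constant $\varepsilon_0$ in Theorem \ref{old_maintheorem} was asserted to be independent of $\gamma$, so that a single $\varepsilon_0$ will serve uniformly along the whole sequence of applications.

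For the pointwise bound, take any $(\alpha, \beta) \in \R^2 \setminus Z$. By construction of $Z$ there must exist some index $k_0$ with $(\alpha, \beta) \notin Z(\gamma_{k_0})$, and applying Theorem \ref{old_maintheorem} with this choice of $\gamma$ then gives, for every $0 < \varepsilon < \varepsilon_0$,
\[
\liminf_{N \to \infty}\frac{1}{\log N}\left|\left\{n\in\{1,\dots,N\}\ \big|\ n\langle n\alpha\rangle\langle n\beta\rangle<\varepsilon\right\}\right| \;\geq\; C\gamma_{k_0}\varepsilon,
\]
so that setting $C(\alpha, \beta) := C\gamma_{k_0} > 0$ (with the same universal $C$ as in Theorem \ref{old_maintheorem}) yields the conclusion.

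There is essentially no obstacle in this argument: all the analytic content is already packaged inside Theorem \ref{old_maintheorem}, and what remains is just the countable-intersection trick together with monotonicity of Hausdorff dimension. The only bookkeeping points to check are the uniformity of $\varepsilon_0$ in $\gamma$ (already granted by Theorem \ref{old_maintheorem}) and the fact that each $(\alpha, \beta) \notin Z$ is genuinely witnessed by some specific $k_0$, which is immediate from the definition of a countable intersection.
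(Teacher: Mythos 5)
Your proof is correct and follows essentially the same approach as the paper, which simply takes $Z = \bigcap_{\gamma} Z(\gamma)$ and notes the conclusion is immediate from Theorem \ref{old_maintheorem}. Your restriction to a countable sequence $\gamma_k \to 0$ is a harmless (and arguably slightly cleaner) variant that changes nothing substantive, and you correctly flag the uniformity of $\varepsilon_0$ in $\gamma$ as the one bookkeeping point that makes the reduction work.
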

	
	Other quantitative results on Conjecture \ref{LittlewoodConjecture} are in \cite{PVZZ22} and \cite{BFG22}.
	
	The aim of this paper is to improve Theorem \ref{old_maintheorem} and Corollary \ref{old_maincorollary}. The following improvements are our main results.
	
	\begin{thm}\label{mainthm}
		For any $0<\gamma<1$, there exists a subset $Z(\gamma)\subset\R^2$ with $\dim_H Z(\gamma)\leq 15\sqrt{\gamma}$ such that, for any $(\alpha,\beta)\in\R^2\setminus Z(\gamma)$ and any $0<\varepsilon<4^{-1}e^{-2}$, we have
		\begin{equation*}
			\liminf_{N\to\infty}\frac{(\log\log N)^2}{(\log N)^2}\left|\left\{n\in\{1,\dots,N\}\left|\ n\langle n\alpha\rangle\langle n\beta\rangle<\varepsilon\right.\right\}\right|\geq \frac{\gamma}{72}.
		\end{equation*}
	\end{thm}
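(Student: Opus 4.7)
The plan is to follow the measure-rigidity/dynamics framework of Einsiedler, Katok and Lindenstrauss \cite{EKL06}, as adapted quantitatively in the author's previous paper \cite{Usu22}, but using the full two-dimensional $A$-action rather than a one-parameter subflow. Encode $(\alpha,\beta)$ as the lattice $x_{\alpha,\beta} := g_{\alpha,\beta}\SL_3(\Z) \in X := \SL_3(\R)/\SL_3(\Z)$, where $g_{\alpha,\beta}$ is the upper unipotent with $(\alpha,\beta)$ placed in the last column, and set $a_{t_1,t_2} := \diag(e^{t_1}, e^{t_2}, e^{-t_1-t_2})$. Then $a_{t_1,t_2} x_{\alpha,\beta}$ has a vector of sup-norm $\leq \delta$ exactly when some integer $n$ satisfies $e^{t_1}\langle n\alpha\rangle \leq \delta$, $e^{t_2}\langle n\beta\rangle \leq \delta$, $e^{-t_1-t_2}n \leq \delta$, which forces $\eta_n := n\langle n\alpha\rangle\langle n\beta\rangle \leq \delta^3$; the set of $(t_1,t_2)$ witnessing a given such $n$ is contained in a right triangle of area $\tfrac{1}{2}\bigl(\log(\delta^3/\eta_n)\bigr)^2$.

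For Step 1 I would construct the exceptional set $Z(\gamma)$ by invoking the entropy/measure-rigidity mechanism of EKL in the quantitative form of \cite{Usu22}. The output is $Z(\gamma) \subset \R^2$ with $\dim_H Z(\gamma) \leq 15\sqrt{\gamma}$ such that, for every $(\alpha,\beta) \notin Z(\gamma)$ and every large $T$, the 2D orbit $\{a_{t_1,t_2} x_{\alpha,\beta} : t_1, t_2 \geq 0,\ t_1 + t_2 \leq T\}$ spends proportion at least $c\gamma$ of the planar Lebesgue area in the $\delta_0$-cusp of $X$ for some fixed $\delta_0 \lesssim \varepsilon^{1/3}$. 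Since this step only requires total cusp mass, essentially no change from \cite{Usu22} is needed and the dimension bound $15\sqrt{\gamma}$ is inherited directly.

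Step 2 is the genuinely new ingredient and the source of the improvement from $\log N$ to $(\log N)^2/(\log\log N)^2$. In \cite{Usu22} the good integers were extracted from cusp hits on a one-parameter subflow, which are one-dimensional. Here I would exploit the full 2D orbit: partition the parameter triangle $\{(t_1,t_2) : t_1, t_2 \geq 0,\ t_1+t_2 \leq T\}$ with $T = \log N$ into boxes of side $\Delta := c_1 \log\log N$, giving $\sim (T/\Delta)^2$ boxes. By Step 1, the cusp area $\geq c\gamma T^2$ meets at least $\gtrsim c\gamma (\log N)^2/(\log\log N)^2$ of these boxes. For each \emph{medium-good} integer $n$ with $\eta_n \geq \delta_0^3 e^{-\Delta} = \delta_0^3 (\log N)^{-c_1}$, the footprint triangle has legs of length $\log(\delta_0^3/\eta_n) \leq \Delta$, so it meets only $O(1)$ boxes and each cusp-box then forces a distinct $n$. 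Tracking the numerical constants (the $\tfrac{1}{2}$ from the triangular area, the proportion from Step 1, and the box-overlap overhead) should yield the explicit $\gamma/72$.

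The main obstacle is the \emph{super-good} integers $n$ with $\eta_n < \delta_0^3(\log N)^{-c_1}$, whose footprints are too large to control box-by-box and can in principle inflate the cusp area without producing proportionally many distinct good integers. I expect to dispose of them by a dyadic decomposition of $\{n \leq N : \eta_n < \varepsilon\}$ according to the size of $\eta_n$, together with a case split: either enough boxes are filled by medium-good witnesses and the above box argument closes, or very many super-good $n$'s are already present and the count is achieved directly from those. The balance between the two alternatives forces the scale $\Delta \asymp \log\log N$ and is what fixes the constant $\gamma/72$.
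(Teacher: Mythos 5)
Your Step~1 misreads the dichotomy that underlies both \cite{EKL06} and \cite{Usu22}, and this is a genuine gap that Step~2 cannot compensate for. The exceptional set $Z(\gamma)$ in \cite[Theorem 1.8]{Usu22} is the set of $(\alpha,\beta)$ whose empirical measures, along some $T_k\to\infty$, converge to a limit $\mu$ with $\mu(X)>1-\gamma$ \emph{and} small entropy $h_{\widehat{\mu}}(a_{1,0})\leq\gamma$. Outside $Z(\gamma)$ there are therefore two very different regimes: (i) $\gamma$-escape of mass, and (ii) convergence to a probability measure of entropy $>\gamma$, which by EKL rigidity forces a Haar component of mass $>p\gamma/4$. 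In regime (ii) the orbit equidistributes, so the proportion of time $(t_1,t_2)\in[0,T]^2$ for which $a_{t_1,t_2}x_{\alpha,\beta}$ lies in the $\delta_0$-cusp tends to $m_X(X_{\delta_0})\asymp\delta_0^3\asymp\varepsilon$, \emph{not} to anything bounded below by $c\gamma$. Since $\varepsilon$ may be arbitrarily small compared with $\gamma$, the blanket claim in your Step~1 that the cusp occupies proportion $\geq c\gamma$ is simply false in regime (ii), and your box-counting Step~2 then yields only $\asymp\varepsilon(\log N)^2/(\log\log N)^2$, which does not dominate $\tfrac{\gamma}{72}(\log N)^2/(\log\log N)^2$ for small $\varepsilon$. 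The paper avoids this by running two separate arguments: Theorem~\ref{large_entropy} handles regime (ii) via the Siegel transform $\widehat{\chi_{\Delta_\varepsilon}}$, lower semicontinuity, and Proposition~\ref{measure_rigidity}, producing the much stronger count $\gtrsim p\gamma\varepsilon\,T^2$ with no $(\log\log N)^{-2}$ loss; Theorem~\ref{escape_of_mass} handles regime (i).

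For regime (i) your box argument is a plausible alternative, but you have not closed the ``super-good'' case, and the paper's route is worth contrasting. Rather than tiling $[0,T]^2$ into boxes of side $\Delta$, the paper (Theorem~\ref{key_theorem_escape_of_mass_2}) uses a direct max/sum dichotomy on the covering $\bigcup_n d_{\varepsilon,n}$ of the cusp region: either $|\Lambda_{\alpha,\beta,T,\varepsilon}|\geq\gamma T^2/L$, or some triangle $d_{\varepsilon,n}$ has area $>L$ and hence leg length $>\sqrt{2L}$; in the latter case the multiples $kn$ for $k\leq e^{\sqrt{2L}/3}$ are all admissible, so $|\Lambda_{\alpha,\beta,T,\varepsilon}|\geq e^{\sqrt{2L}/3}$ anyway. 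Balancing at $L=18(\log T)^2$ gives $\gamma T^2/(18(\log T)^2)$, and with $T=\tfrac12\log N$ this is exactly $\tfrac{\gamma}{72}(\log N)^2/(\log\log N)^2$ --- which also explains why the box scale $\Delta\asymp\sqrt{L}\asymp\log\log N$ appears in your heuristic. Your planned dyadic/case-split treatment of the super-good $n$'s is really this same multiple trick in disguise, but the paper's formulation dispenses with the tiling and the incidence bookkeeping entirely and pins down the constant cleanly. If you want to keep the box approach, you should (a) restrict it to the escape-of-mass regime only, (b) make the super-good case precise via the multiple trick, and (c) handle the high-entropy regime by the separate Siegel-transform argument.
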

	
	\begin{cor}\label{maincor}
		There exists a subset $Z\subset\R^2$ with $\dim_H Z=0$ such that, for any $(\alpha,\beta)\in\R^2\setminus Z$ and any $0<\varepsilon<4^{-1}e^{-2}$, we have
		\begin{equation*}
			\liminf_{N\to\infty}\frac{(\log\log N)^2}{(\log N)^2}\left|\left\{n\in\{1,\dots,N\}\left|\ n\langle n\alpha\rangle\langle n\beta\rangle<\varepsilon\right.\right\}\right|\geq C_{\alpha,\beta}
		\end{equation*}
		for some positive constant $C_{\alpha,\beta}$ which depends only on $(\alpha,\beta)$, not on $\varepsilon$.
	\end{cor}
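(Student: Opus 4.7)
The plan is to derive Corollary \ref{maincor} from Theorem \ref{mainthm} by the same countable intersection / exhaustion argument that passes from Theorem \ref{old_maintheorem} to Corollary \ref{old_maincorollary}. The crucial new feature to exploit is that the lower bound $\gamma/72$ in Theorem \ref{mainthm} is uniform in $\varepsilon$ (unlike the factor $C\gamma\varepsilon$ in Theorem \ref{old_maintheorem}), so once a suitable $\gamma$ is chosen for a given pair $(\alpha,\beta)$, the resulting constant will automatically be $\varepsilon$-independent, as the statement of Corollary \ref{maincor} demands.

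First I would define
$$Z := \bigcap_{k\in\N} Z(1/k),$$
where $Z(\gamma)\subset\R^2$ is the exceptional set supplied by Theorem \ref{mainthm}. Since Hausdorff dimension is monotone under inclusion and $Z\subset Z(1/k)$ for every $k\geq 1$, we obtain $\dim_H Z\leq 15/\sqrt{k}$ for every such $k$, and letting $k\to\infty$ yields $\dim_H Z=0$.

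Next, fix any $(\alpha,\beta)\in\R^2\setminus Z$. By the very definition of $Z$ there exists an integer $k=k(\alpha,\beta)\geq 1$ with $(\alpha,\beta)\notin Z(1/k)$. Applying Theorem \ref{mainthm} with the parameter $\gamma=1/k$ then gives, for every $0<\varepsilon<4^{-1}e^{-2}$,
$$\liminf_{N\to\infty}\frac{(\log\log N)^2}{(\log N)^2}\left|\left\{n\in\{1,\dots,N\}\left|\ n\langle n\alpha\rangle\langle n\beta\rangle<\varepsilon\right.\right\}\right|\geq \frac{1}{72\,k(\alpha,\beta)}.$$
Setting $C_{\alpha,\beta}:=1/(72\,k(\alpha,\beta))$, a positive quantity depending only on $(\alpha,\beta)$ and not on $\varepsilon$, completes the argument.

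There is essentially no genuine obstacle at this step: the deduction is a routine exhaustion argument and all of the real content is contained in Theorem \ref{mainthm}. The only point that deserves a brief check is precisely the $\varepsilon$-uniformity of the bound in Theorem \ref{mainthm}, which is what distinguishes the present corollary from Corollary \ref{old_maincorollary} and allows $C_{\alpha,\beta}$ to be chosen independently of $\varepsilon$.
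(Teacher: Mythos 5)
Your argument is correct and is exactly the same routine exhaustion used in the paper to pass from Theorem \ref{old_maintheorem} to Corollary \ref{old_maincorollary} (where $Z=\bigcap_{0<\gamma<1}Z(\gamma)$ is taken; your countable intersection $\bigcap_k Z(1/k)$ is equivalent for the purpose). You also correctly highlight the one point of substance: the $\varepsilon$-independence of the lower bound $\gamma/72$ in Theorem \ref{mainthm} is what allows $C_{\alpha,\beta}$ to be chosen independent of $\varepsilon$, unlike in Corollary \ref{old_maincorollary}.
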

	
	\begin{rem}\label{rem_exceptions_unchanged}
		The exceptional set $Z(\gamma)$ in Theorem \ref{mainthm} is the same as that in Theorem \ref{old_maintheorem}.
	\end{rem}
	
	\begin{rem}\label{rem_extension}
		Actually, Theorem \ref{mainthm} holds, without changing the exceptional set $Z(\gamma)$, even if we replace $(\log\log N)^2$ with $18^{-1}f_\gamma(2^{-1}\log N)$ for each $\gamma$, where $f_\gamma:\R_{>0}\to\R_{>0}$ is an arbitrary increasing function such that
		\begin{equation*}
			\frac{\gamma T^2}{f_\gamma(T)}\leq\exp\left(\frac{\sqrt{2}}{3}f_\gamma(T)^{1/2}\right)
		\end{equation*}
		for sufficiently large $T>0$.
	\end{rem}
	
	\begin{rem}
		It seems to be believed that, for Lebesgue a.e. $(\alpha,\beta)\in\R^2$, the order of
		$\left|\left\{n\in\{1,\dots,N\}\left|\ n\langle n\alpha\rangle\langle n\beta\rangle<\varepsilon\right.\right\}\right|$ is $(\log N)^2$. It is shown in \cite{BFG22} that, for $\eta\in(1,2)$, $\delta>0$ and Lebesgue a.e. $(\alpha,\beta)\in\R^2$, we have
		\begin{equation*}
			\left|\left\{n\in\{1,\dots,N\}\left|\ n\langle n\alpha\rangle\langle n\beta\rangle<\varepsilon\right.\right\}\right|=2\varepsilon(\log N)^2+O_{\alpha,\beta,\delta}\left(\varepsilon^{2/\eta}\cdot(\log N)^2\cdot (\log\log N)^{6+\delta}\right)
		\end{equation*}
		for any $0<\varepsilon<1$ and sufficiently large $N\in\N$. This result gives an upper bound. But we can not obtain a lower bound, because the second term can not be negligible when $N\to\infty$. It seems that no further results are known.
	\end{rem}
	
	We can prove Theorem \ref{mainthm} by the same way as the proof in \cite{Usu22} of the previous result Theorem \ref{old_maintheorem}. We only need to replace Theorem 2.1 and Theorem 2.2 in \cite{Usu22} with Theorem \ref{large_entropy} and Theorem \ref{escape_of_mass} below, respectively. Hence, in this paper, we only give proofs for Theorem \ref{large_entropy} and Theorem \ref{escape_of_mass}.
	
	Theorem \ref{large_entropy} and Theorem \ref{escape_of_mass}, the improvements of Theorem 2.1 and Theorem 2.2 in \cite{Usu22}, give an estimate of $\left|\left\{ n\in\N \left|\ n<e^{2T}, n\langle n\alpha\rangle\langle n\beta\rangle<\varepsilon\right.\right\}\right|$ for large $T>1$ when the empirical measures of the element in $\SL(3,\R)/\SL(3,\Z)$ corresponding to $(\alpha,\beta)$ with respect to the full diagonal action {\it converge to a measure with large entropy} and {\it exhibit escape of mass} , respectively. In \cite{Usu22}, it is also shown that Hausdorff dimension of the set of $(\alpha,\beta)$ in the exceptional case is small (\cite[Theorem 1.8]{Usu22}), which leads Theorem 1.2, together with \cite[Theorem 2.1, Theorem 2.2]{Usu22}.
	
	We recall the notations in \cite{Usu22}. Let
	\begin{equation*}
		G=\SL(3,\R),\quad\Gamma=\SL(3,\Z),\quad X=G/\Gamma
	\end{equation*}
	and $m_X$ be the $G$-invariant Borel probability measure on $X$.
	We consider the left action on $X$ by the group of diagonal matrices in $G$ with positive entries
	\begin{equation*}
		A=\left\{\left. \diag(e^{t_1},e^{t_2},e^{t_3})=
		\begin{pmatrix}
			e^{t_1}& & \\ &e^{t_2}& \\ & &e^{t_3}
		\end{pmatrix}
		\right|\ t_1,t_2,t_3\in\R, t_1+t_2+t_3=0
		\right\}.
	\end{equation*}
	We write $a_{s,t}=\diag(e^{-s-t},e^s,e^t)\in A$ for $s,t\in\R$ and $A^+=\{a_{s,t}\left|s,t\geq 0\right.\}\subset A$. For $x\in X$ and $T>0$, we call the Borel probability measure
	\begin{equation*}
		\delta^T_{A^+,x}=\frac{1}{T^2}\int_{[0,T]^2}\delta_{a_{s,t}x}\ dsdt
	\end{equation*}
	on X the {\it $T$-empirical measure of $x$ with respect to the $A^+$-action}. Finally, for $(\alpha,\beta)\in\R^2$, we write
	\begin{equation*}
		\tau_{\alpha,\beta}=
		\begin{pmatrix}
			1& & \\ \alpha&1& \\ \beta& &1
		\end{pmatrix}\in G.
	\end{equation*}
	For a sequence of Borel probability measures $(\mu_k)_{k=1}^{\infty}$ and a finite Borel measure $\mu$ on $X$, $(\mu_k)_{k=1}^\infty$ converges to $\mu$ with respect to the weak* topology if $\int_X f\ d\mu_k\to\int_X f\ d\mu$ as $k\to\infty$ for any $f\in C_0(X)$. We notice that, since $X$ is not compact, $\mu$ might not be a probability measure, but $0\leq \mu(X)\leq 1$ in general.
	Here, we state Theorem \ref{large_entropy} and Theorem \ref{escape_of_mass}.
	
	\begin{thm}\label{large_entropy}
		Let $0<\gamma, p<1$, $\alpha,\beta\in\R$ and $(T_k)_{k=1}^\infty$ be a sequence in $\R_{>0}$ such that $T_k\to\infty$ as $k\to\infty$. Assume that, for $\tau_{\alpha,\beta}\Gamma\in X$, the sequence of the empirical measures $(\delta^{T_k}_{A^+,\tau_{\alpha,\beta}\Gamma})_{k=1}^\infty$ converges to an $A$-invariant Borel probability measure $\mu$ with respect to the weak* topology such that
		\begin{equation*}
			p<\mu(X)\leq 1\quad {\text and}\quad h_{\widehat{\mu}}(a_{1,0})>\gamma
		\end{equation*}
		for $\widehat{\mu}=\mu(X)^{-1}\mu$: the normalization of $\mu$. Then, for every $0<\varepsilon<4^{-1}e^{-2}$, we have
		\begin{equation*}
			\liminf_{k\to\infty}\frac{1}{T_k^2}\left|\left\{ n\in\N \left|\ n<e^{2T_k}, n\langle n\alpha\rangle\langle n\beta\rangle<\varepsilon\right.\right\}\right|\geq p\gamma\varepsilon.
		\end{equation*}
	\end{thm}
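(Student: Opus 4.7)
The plan is to translate the Diophantine counting problem into a dynamical one via a Siegel-type correspondence, then use weak-* convergence and the entropy hypothesis to extract the stated lower bound. First, fix $\varepsilon\in(0,4^{-1}e^{-2})$ and introduce a lower semicontinuous Siegel-type counting function $\chi_\varepsilon:X\to\Z_{\geq 0}$ by
\[
\chi_\varepsilon(\Lambda)=\#\bigl\{v\in\Lambda_{\mathrm{prim}}\cap R_\varepsilon\bigr\},
\]
where $R_\varepsilon\subset\R^3$ is a compactly supported test region such as $\{1\leq v_1\leq e,\ |v_2|,|v_3|<1/2,\ |v_1 v_2 v_3|<\varepsilon\}$, normalized so that $\int_X\chi_\varepsilon\,dm_X$ is comparable to $\varepsilon$ (up to logarithmic factors that will ultimately cancel). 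A Fubini calculation shows that each $n\in\N$ with $n\langle n\alpha\rangle\langle n\beta\rangle<\varepsilon$ contributes to $\int_{[0,T_k]^2}\chi_\varepsilon(a_{s,t}\tau_{\alpha,\beta}\Gamma)\,ds\,dt$ through the lattice vector $(n,\,n\alpha-p,\,n\beta-q)\in\tau_{\alpha,\beta}\Z^3$ (with $p,q$ the nearest integers to $n\alpha$, $n\beta$) on an explicit region in $(s,t)$-space of controlled area, yielding
\[
\int_{[0,T_k]^2}\chi_\varepsilon(a_{s,t}\tau_{\alpha,\beta}\Gamma)\,ds\,dt\ \lesssim\ \bigl|\{n<e^{2T_k}:n\langle n\alpha\rangle\langle n\beta\rangle<\varepsilon\}\bigr|.
\]
The left-hand side equals $T_k^2\int_X\chi_\varepsilon\,d\delta^{T_k}_{A^+,\tau_{\alpha,\beta}\Gamma}$.

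Second, since $\chi_\varepsilon$ is nonnegative and lower semicontinuous and $\delta^{T_k}_{A^+,\tau_{\alpha,\beta}\Gamma}\to\mu$ in the weak-* topology, the portmanteau theorem gives
\[
\liminf_{k\to\infty}\int_X\chi_\varepsilon\,d\delta^{T_k}_{A^+,\tau_{\alpha,\beta}\Gamma}\ \geq\ \int_X\chi_\varepsilon\,d\mu\ =\ \mu(X)\int_X\chi_\varepsilon\,d\widehat\mu\ \geq\ p\int_X\chi_\varepsilon\,d\widehat\mu.
\]
Combined with the first step, the proof reduces to establishing the entropy--Siegel lower bound $\int_X\chi_\varepsilon\,d\widehat\mu\gtrsim\gamma\varepsilon$ for every $A$-invariant probability measure $\widehat\mu$ with $h_{\widehat\mu}(a_{1,0})>\gamma$.

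Proving this entropy--Siegel inequality is the main obstacle. The guiding intuition is that $A$-invariance together with $h_{\widehat\mu}(a_{1,0})>\gamma$ forces the leafwise conditional measures of $\widehat\mu$ on the unstable horocyclic foliation of $a_{1,0}$ to carry a definite amount of spread, so that a proportion of order $\gamma$ of $\widehat\mu$'s mass is distributed over the shallow-cusp region where $\chi_\varepsilon$ is supported; the Siegel-volume normalization then contributes the factor $\varepsilon$. Quantitatively, this extends the Pesin--Ruelle / Ledrappier--Young-style leafwise-measure argument used in the proof of Theorem~2.1 of \cite{Usu22}, but now fully exploits the two-parameter $A^+$-averaging (which is the source of the factor-of-$T_k$ improvement over \cite{Usu22}). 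Assembling the three ingredients then produces $\liminf_{k\to\infty}T_k^{-2}\bigl|\{n<e^{2T_k}:n\langle n\alpha\rangle\langle n\beta\rangle<\varepsilon\}\bigr|\geq p\gamma\varepsilon$, as claimed.
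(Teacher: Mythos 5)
Your overall architecture matches the paper's: translate the count into a Siegel transform of a compactly supported test set, lower-bound the $T_k$-empirical integral by a portmanteau/lower-semicontinuity argument, and then reduce to a lower bound for $\int_X\chi_\varepsilon\,d\widehat\mu$ in terms of $\gamma$ and $\varepsilon$. But you have left precisely the load-bearing step as a heuristic, and the heuristic you give is not the right mechanism.

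The inequality $\int_X\chi_\varepsilon\,d\widehat\mu\gtrsim\gamma\varepsilon$ is \emph{not} obtained in the paper (nor in \cite{Usu22}) by a Pesin/Ledrappier--Young leafwise-measure argument forcing mass into a shallow-cusp region. Positive entropy alone, without more, does not tell you where the mass of $\widehat\mu$ sits, and in particular gives no control over the value of the Siegel transform: there is no reason why a positive-entropy $A$-invariant measure should give the region $R_\varepsilon$ any particular measure, let alone with the sharp constant $\varepsilon$. What actually makes the step go through is the Einsiedler--Katok--Lindenstrauss measure rigidity theorem (Proposition \ref{measure_rigidity} in the paper, from \cite{EKL06}): every $A$-invariant, ergodic probability measure on $X$ with positive entropy for some $a\in A\setminus\{e\}$ \emph{is} the Haar measure $m_X$. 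One takes the ergodic decomposition of $\widehat\mu$, observes that every ergodic component is either $m_X$ (with $h_{m_X}(a_{1,0})=4$) or has zero entropy, so from $h_{\widehat\mu}(a_{1,0})>\gamma$ the Haar component has weight at least $\gamma/4$. This yields $\int_X\varphi\,d\widehat\mu\geq\frac{\gamma}{4}\int_X\varphi\,dm_X$ for every non-negative Borel $\varphi$, and the Siegel integral formula then evaluates $\int_X\widehat{\chi_{\Delta_\varepsilon}}\,dm_X=m_{\R^3}(\Delta_\varepsilon)=4\varepsilon$, giving exactly the factor $\gamma\varepsilon$. Without this rigidity input your chain of inequalities stops at $\int_X\chi_\varepsilon\,d\widehat\mu$, which you cannot bound below.

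Two smaller points. First, to invoke the rigidity result, one needs an $A$-invariant (not merely $\{a_{m,n}\}_{m,n\in\Z}$-invariant) measure, and the paper has to pass from the continuous-time empirical measures to discrete ones (introducing $\mu'$ in Lemma \ref{property_mu_dash}) precisely to make the tessellation $\Omega_{T,\varepsilon}\supset\bigsqcup_{m,n}a_{m,n}^{-1}\Delta_\varepsilon$ an honest disjoint union and to have clean control of the entropy after this discretization; your Fubini formulation avoids the tessellation but then you must still verify that the constant in $\lesssim$ can be made exactly $1$ (your ``up to logarithmic factors'' disclaimer signals that it is not, as stated, and the target bound $p\gamma\varepsilon$ has no slack for unresolved constants). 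Second, the remark that the factor-of-$T_k$ improvement over \cite{Usu22} comes from exploiting the two-parameter averaging is not accurate: the stronger order in the main theorem comes from the escape-of-mass estimate (Theorem \ref{escape_of_mass}), not from this entropy case, and both \cite{Usu22} and the present argument already use the two-parameter $A^+$-averaging in the same way.
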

	
	\begin{thm}\label{escape_of_mass}
		Let $0<\gamma<1$, $\alpha,\beta\in\R$ and $(T_k)_{k=1}^\infty$ be a sequence in $\R_{>0}$ such that $T_k\to\infty$ as $k\to\infty$. Assume that, for $\tau_{\alpha,\beta}\Gamma\in X$, the sequence of the empirical measures $(\delta^{T_k}_{A^+,\tau_{\alpha,\beta}\Gamma})_{k=1}^\infty$ exhibits $\gamma$-escape of mass, that is, it satisfies
		\begin{equation*}
			\limsup_{k\to\infty}\delta^{T_k}_{A^+,\tau_{\alpha,\beta}\Gamma}(K)\leq 1-\gamma
		\end{equation*}
		for any compact subset $K\subset X$. Then, for any $0<\varepsilon<1/2$, we have
		\begin{equation*}
			\liminf_{k\to\infty}\frac{(\log T_k)^2}{T_k^2}\left|\left\{ n\in\N \left|\ n<e^{2T_k}, n\langle n\alpha\rangle\langle n\beta\rangle<\varepsilon\right.\right\}\right|\geq\frac{\gamma}{18}.
		\end{equation*}
	\end{thm}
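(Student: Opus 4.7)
The plan is to convert the escape-of-mass hypothesis into an area lower bound for the cusp-visit region in the parameter square $[0,T_k]^2$, then extract the count of distinct integers via a careful per-integer area bound.

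Set $\eta=\varepsilon^{1/3}$ (so $\eta<1$). By Mahler's compactness criterion, $K_\eta:=\{y\in X:\lambda_1(y)\geq\eta\}$ is compact, where $\lambda_1$ denotes the length of the shortest non-zero lattice vector. The $\gamma$-escape hypothesis applied to $K_\eta$ yields $|F_k|\geq(\gamma-o(1))T_k^2$ for $F_k:=\{(s,t)\in[0,T_k]^2:\lambda_1(a_{s,t}\tau_{\alpha,\beta}\Z^3)<\eta\}$. For $\eta$ small the shortest vector at each $(s,t)\in F_k$ is unique (Minkowski) and of the form $\pm a_{s,t}v_n$ with $v_n=(n,n\alpha+p,n\beta+q)$, $n\in\Z_{>0}$, $p,q\in\Z$ nearest to $-n\alpha,-n\beta$; the three coordinate bounds force $n<e^{2T_k}$ and $\rho_n:=n\langle n\alpha\rangle\langle n\beta\rangle<\eta^3=\varepsilon$. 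Partitioning $F_k=\bigsqcup_n E_n$ by the associated integer $n$, each $E_n$ lies inside the right triangle
\[
\Delta_n:=\{(s,t):s\leq A_n,\,t\leq B_n,\,s+t\geq C_n\}
\]
with $A_n=\log(\eta/\langle n\alpha\rangle)$, $B_n=\log(\eta/\langle n\beta\rangle)$, $C_n=\log(n/\eta)$, of unrestricted area $\tfrac{1}{2}L_n^2$ where $L_n:=A_n+B_n-C_n=\log(\eta^3/\rho_n)\geq 0$.

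The crux is to establish an effective per-integer area bound of $18(\log T_k)^2$ up to lower order. I would split the relevant integers at the threshold $L_n=6\log T_k$, equivalently $\rho_n=\eta^3 T_k^{-6}$. For "moderate" $n$ the raw triangle estimate $\tfrac{1}{2}L_n^2\leq 18(\log T_k)^2$ applies directly. For "deep" $n$ the triangle $\Delta_n$ can be much larger, but its interior reaches into the deep cusp $\{\lambda_1<\eta T_k^{-2}\}$, whose Lebesgue measure in $[0,T_k]^2$ is $o(T_k^2)$ by a Kleinbock--Margulis quantitative non-divergence estimate, applicable because $\tau_{\alpha,\beta}\Z^3$ is a unipotent translate of $\Z^3$. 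The outer "shell" of each deep triangle is then controlled by bookkeeping against the global area constraint $\sum_n\tfrac{1}{2}L_n^2\leq|F_k|\leq T_k^2$ (via Cauchy--Schwarz on $\sum_{\text{deep}}L_n$ against $\sum_{\text{deep}}L_n^2$), showing that the aggregate deep-integer contribution adds only a negligible correction to the per-$n$ cost.

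Combining these pieces gives $(\gamma-o(1))T_k^2\leq|F_k|\leq N_k\cdot 18(\log T_k)^2+o(T_k^2)$, where $N_k$ is the count of integers $n\in[1,e^{2T_k}]$ with $\rho_n<\varepsilon$. Rearranging yields $N_k\geq(\gamma/18-o(1))T_k^2/(\log T_k)^2$, which is the desired lower bound. The main obstacle is the crux step: the escape-of-mass hypothesis produces cusp-area but carries no information about excursion depth, so the sharpening from the earlier bound $C\gamma\varepsilon\log N$ of \cite[Theorem 2.2]{Usu22} to $\gamma T_k^2/(18\log^2 T_k)$ rests on a genuine new quantitative non-divergence input, together with the finer combinatorial control of the shell areas of very deep triangles.
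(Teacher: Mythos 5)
Your opening steps --- converting escape of mass into a cusp-area lower bound $\gamma T_k^2$ and decomposing that region into per-integer triangles $\Delta_n$ --- match the paper's Theorem~3.1 and the setup of Theorem~3.2. But your ``crux step'' has a genuine gap, and it is exactly at the point where the paper's argument does something quite different.

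Your plan is to establish a per-integer area cap of roughly $18(\log T_k)^2$, using quantitative non-divergence to argue that the deep cusp region $\{\lambda_1<\eta T_k^{-2}\}$ in $[0,T_k]^2$ has Lebesgue measure $o(T_k^2)$, and then bookkeeping the shells. This fails for two reasons. First, Kleinbock--Margulis quantitative non-divergence applies to polynomial (in particular unipotent) maps into the space of lattices; the map $(s,t)\mapsto a_{s,t}\tau_{\alpha,\beta}\Z^3$ has entries that are \emph{exponential} in $(s,t)$, and the conclusion of quantitative non-divergence is simply false for the diagonal action --- indeed, for suitable Liouville-type $(\alpha,\beta)$ a single triangle $d_{\varepsilon,n}\cap[0,T]^2$ can occupy a fixed proportion of the square, so the deep-cusp area is not $o(T_k^2)$. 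Second, and more to the point, the escape-of-mass hypothesis gives only a lower bound on time spent in the cusp and provides no upper bound on excursion depth, so there is nothing to support a per-integer area cap at all: one triangle can have area comparable to $T^2$.

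The paper gets around this with a different mechanism that you have not reproduced. Rather than trying to bound each triangle's area, it observes (Theorem~3.3) a dichotomy via \emph{multiplicativity}: if some triangle $d_{\varepsilon,n}\cap[0,T]^2$ has area $>L$, then its ``leg length'' $\lambda_{\varepsilon,n}>\sqrt{2L}$, and for every integer $k\leq e^{\sqrt{2L}/3}$ the integer $kn$ still satisfies $kn<e^{2T}$ and $kn\langle kn\alpha\rangle\langle kn\beta\rangle\leq k^3\cdot n\langle n\alpha\rangle\langle n\beta\rangle\leq\varepsilon^3$. So a single deep triangle yields \emph{exponentially} many integers $e^{\sqrt{2L}/3}$ in the set. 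Hence either all triangles have area $\leq L$, forcing $|\Lambda|\geq\gamma T^2/L$ by covering, or one triangle is deep, forcing $|\Lambda|\geq e^{\sqrt{2L}/3}$; taking $L=18(\log T)^2$ balances the two and gives the stated bound. This multiplicativity trick is the essential new idea that replaces the (nonexistent) non-divergence input you would need, and without it your proposal does not close.
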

	
	\begin{rem} We take $p=1-\gamma$ in Theorem \ref{large_entropy} when we apply this theorem to the proof of Theorem \ref{mainthm}.
	\end{rem}
	
	\begin{rem}\label{extension_theorem_escape_of_mass} Theorem \ref{escape_of_mass} holds even if we replace $(\log T_k)^2$ with $18^{-1}f_\gamma(T_k)$, where $f_\gamma: \R_{>0}\to\R_{>0}$ is an arbitrary increasing function with the property in Remark \ref{rem_extension}. This allows us to extend Theorem \ref{mainthm} as we saw in Remark \ref{rem_extension}.
	\end{rem}
	
	\begin{rem}
		We notice that the orders of the conclusions of Theorem \ref{large_entropy} and \ref{escape_of_mass} are different. The order $(\log N)^2/(\log\log N)^2$ of Theorem \ref{mainthm} is derived from the smaller order $T^2/(\log T)^2$ for $T=\log N$.
	\end{rem}
	
	\section{Proof of Theorem \ref{large_entropy}}\label{proof_large_entropy_case}
	
	In this section, we prove Theorem \ref{large_entropy}. Here, as in the proof of Theorem 2.1 in \cite{Usu22}, the {\it measure rigidity for the full diagonal action on $X=\SL(3,\R)/\SL(3,\Z)$ under positive entropy condition} is essential. This is a result of Einsiedler, Katok and Lindenstrauss (\cite{EKL06}). The following is a modified version of this result, which is easily deduced from the original one and we give a proof in Appendix \ref{proof_measure_rigidity} for the completeness.
	
	\begin{prop}\label{measure_rigidity}
		Let $\mu$ be a Borel probability measure on $X=\SL(3,\R)/\SL(3,\Z)$ which is invariant and ergodic under the action of the discrete subgroup $\{a_{m,n}\}_{m,n\in\Z}$ of $A$. Assume that there exists $(m,n)\in\Z^2\setminus\{0\}$ such that $h_\mu(a_{m,n})>0$. Then, we have $\mu=m_X$.
	\end{prop}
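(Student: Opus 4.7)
The plan is to reduce the statement to the original Einsiedler--Katok--Lindenstrauss measure rigidity theorem (which assumes invariance under the full group $A$) by averaging the $\Z^2$-invariant measure $\mu$ over a fundamental domain for $\{a_{m,n}\}_{m,n\in\Z}\subset A\cong\R^2$, and then transferring the conclusion back to $\mu$ via uniqueness of the ergodic decomposition.

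Concretely, I would set $\widetilde\mu=\int_{[0,1]^2}(a_{s,t})_*\mu\, ds\, dt$. Since $[0,1]^2$ is a fundamental domain for the lattice $\{a_{m,n}\}_{m,n\in\Z}$ in $A$, $\widetilde\mu$ is an $A$-invariant Borel probability measure on $X$. It is also $A$-ergodic: every $A$-invariant Borel set $B$ is a fortiori $\{a_{m,n}\}$-invariant, hence $\mu(B)\in\{0,1\}$ by hypothesis, and $\widetilde\mu(B)=\mu(B)$ because $B$ is $A$-invariant. Positive entropy is inherited as well: using the commutativity of $A$ (which gives $h_{(a_{s,t})_*\mu}(a_{m,n})=h_\mu(a_{m,n})$ for every $(s,t)$) together with the affinity of $h_\bullet(a_{m,n})$ in the measure, one obtains $h_{\widetilde\mu}(a_{m,n})=h_\mu(a_{m,n})>0$. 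Applying the original EKL rigidity theorem to $\widetilde\mu$ then yields $\widetilde\mu=m_X$.

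The remaining step is to deduce $\mu=m_X$ from $\widetilde\mu=m_X$. The key input is that $m_X$ is itself ergodic under the subaction of $\{a_{m,n}\}_{m,n\in\Z}$: by Howe--Moore, $m_X$ is mixing under every non-identity element $a_{m,n}$, since this element generates a non-precompact subgroup of $G$. Each $(a_{s,t})_*\mu$ appearing in $m_X=\int_{[0,1]^2}(a_{s,t})_*\mu\, ds\, dt$ is $\Z^2$-invariant and, by commutativity, $\Z^2$-ergodic, so this identity exhibits $m_X$ as an integral of $\Z^2$-ergodic probability measures. By uniqueness of the $\Z^2$-ergodic decomposition of $m_X$, this integral must collapse to the point mass at $m_X$, which forces $(a_{s,t})_*\mu=m_X$ for Lebesgue-a.e.\ $(s,t)\in[0,1]^2$; $A$-invariance of $m_X$ then gives $\mu=m_X$. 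I expect this final descent to be the main point requiring care: one has to use the $\Z^2$-ergodicity of $\mu$ on the input side (to obtain $A$-ergodicity of $\widetilde\mu$) together with the $\Z^2$-ergodicity of $m_X$ on the output side (to trivialise the ergodic decomposition), since the averaging construction alone only produces $\widetilde\mu=m_X$, not the pointwise identity $\mu=m_X$.
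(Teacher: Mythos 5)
Your proposal matches the paper's proof step for step: average $\mu$ over $[0,1]^2$ to obtain an $A$-invariant, $A$-ergodic measure $\widetilde\mu$ of positive entropy, apply the Einsiedler--Katok--Lindenstrauss rigidity theorem to conclude $\widetilde\mu=m_X$, and then use $\{a_{m,n}\}_{m,n\in\Z}$-ergodicity of $m_X$ to force $(a_{s,t})_*\mu=m_X$ for a.e.\ $(s,t)$ and hence $\mu=m_X$. The only cosmetic difference is your appeal to Howe--Moore mixing for the $\Z^2$-ergodicity of $m_X$, where the paper cites an argument along the lines of \cite[Corollary 11.19]{EW11}.
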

	
	We introduce a classical result by Siegel needed for our proof. For a Borel measurable and bounded function $\varphi: \R^3\to\R$ such that $\{v\in\R^3\left|\ \varphi(v)\neq 0\right.\}$ is bounded, we define the {\it Siegel transform} $\widehat{\varphi}: X\to\R$ by
	\begin{equation*}
		\widehat{\varphi}(x)=\sum_{v\in x\setminus\{0\}}\varphi(v)
	\end{equation*}
	for each $x\in X$. Here, we identify each element $x=g\cdot\SL(3,\Z)\in X=\SL(3,\R)/\SL(3,\Z)$ ($g\in\SL(3,\R)$) with the unimodular lattice $g\cdot\Z^3$ in $\R^3$. Then, we have the following.\footnote{This is needed for a proof of the full statement of Theorem \ref{large_entropy}. However, this is not needed just to prove Theorem \ref{mainthm}, because Theorem \ref{mainthm} says about the order of $(\log N)^2/(\log\log N)^2$ and it is sufficient to show that the $\liminf$ in Theorem \ref{large_entropy} is positive.}
	
	\begin{prop}[Siegel integral formula, {\cite{Sie45}}]\label{Siegel_integral_formula}
		Let $\varphi: \R^3\to\R$ be a Borel measurable and bounded function such that $\{v\in\R^3\left|\ \varphi(v)\neq 0\right.\}$ is bounded. Then, $\widehat{\varphi}\in L^1(m_X)$ and
		\begin{equation*}
			\int_X\widehat{\varphi}\ dm_X=\int_{\R^3}\varphi(v)\ dv.
		\end{equation*}
	\end{prop}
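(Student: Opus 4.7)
The plan is to prove the Siegel integral formula by the classical unfolding argument: unfold against a fundamental domain for $\Gamma$ in $G$, group nonzero integer vectors by their primitive representatives, and exploit the transitivity of $\Gamma$ on primitive vectors to reduce the computation to the homogeneous space $G/H$, where $H=\mathrm{Stab}_G(e_1)$ is the stabilizer of $e_1=(1,0,0)$. Since $G/H$ is $G$-equivariantly identified with $\R^3\setminus\{0\}$ via $gH\mapsto ge_1$, the answer will naturally take the form of a Lebesgue integral on $\R^3$.

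Concretely, I would fix a Borel fundamental domain $F\subset G$ for the right $\Gamma$-action and a Haar measure $dg$ on $G$ so that the pushforward of $\mathrm{vol}(F)^{-1}\cdot dg|_F$ equals $m_X$. Assuming first $\varphi\geq 0$, Tonelli gives
\[
\int_X \widehat{\varphi}\,dm_X=\frac{1}{\mathrm{vol}(F)}\sum_{v\in\Z^3\setminus\{0\}}\int_F\varphi(gv)\,dg.
\]
Decomposing $\Z^3\setminus\{0\}=\bigsqcup_{k\geq 1}k\cdot P$ with $P$ the set of primitive integer vectors, and using that $\Gamma$ acts transitively on $P$ with stabilizer $H_\Z=H\cap\Gamma$, each inner sum $\sum_{v\in P}\int_F\varphi(kgv)\,dg$ unfolds to an integral over $H_\Z\backslash G$ and, after disintegrating along the finite-volume fiber $H/H_\Z$, becomes $\mathrm{vol}(H/H_\Z)\int_{G/H}\varphi(kge_1)\,d\bar g$. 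Transferring the $G$-invariant measure on $G/H$ to Lebesgue measure on $\R^3\setminus\{0\}$ and using the change of variables $v\mapsto kv$, which contributes $k^{-3}$, summing over $k$ produces a $\zeta(3)$. Combining the factors gives $\int_X\widehat{\varphi}\,dm_X=C\int_{\R^3}\varphi\,dv$ for some constant $C$ independent of $\varphi$.

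The remaining step is to identify $C=1$, which is the main obstacle: it requires tracking three Haar normalizations (covolume of $\Gamma$ in $G$, volume of $H/H_\Z$, and the Jacobian of $G/H\cong\R^3\setminus\{0\}$) against the accumulated $\zeta(3)$, as in Siegel's original calculation for $\SL(n,\R)/\SL(n,\Z)$. A cleaner alternative is to argue abstractly that, by $G$-invariance and uniqueness of $G$-invariant Radon measures on $\R^3\setminus\{0\}$, the two sides must be proportional, and then pin down the constant by evaluating both sides on one explicit $\varphi$ whose Siegel transform can be computed by hand. A secondary technical point is the a priori assertion $\widehat{\varphi}\in L^1(m_X)$, which falls out by applying the above computation to $|\varphi|$ and then splitting $\varphi=\varphi_+-\varphi_-$.
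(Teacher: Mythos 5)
The paper does not prove this statement; it is quoted from Siegel \cite{Sie45} and used as a black box, so there is no in-paper proof to compare against. Your outline is the standard unfolding argument and is correct in structure: push the Haar integral to a fundamental domain, split $\Z^3\setminus\{0\}$ into $\bigsqcup_{k\ge 1}kP$, use transitivity of $\Gamma$ on primitive vectors $P$ with stabilizer $H_\Z=H\cap\Gamma$ to unfold the $P$-sum onto $G/H_\Z$, disintegrate through the finite-volume fiber $H/H_\Z$ to land on $G/H\cong\R^3\setminus\{0\}$, and then sum the $k^{-3}$ Jacobians into $\zeta(3)$. (Minor convention slip: with $X=G/\Gamma$ and a right $\Gamma$-fundamental domain, the unfolded integral lives on $G/H_\Z$, not $H_\Z\backslash G$.)

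The genuine gap is exactly the one you flag: pinning the constant $C$ to $1$. However, your proposed shortcut does not actually avoid the difficulty. There is no explicit $\varphi$ for which $\int_X\widehat{\varphi}\,dm_X$ can be ``computed by hand'' independently of the normalization bookkeeping; what people actually do is take $\varphi=\chi_{B_R}$ and let $R\to\infty$, using that $\widehat{\chi_{B_R}}(x)/m(B_R)\to 1$ pointwise for every unimodular lattice $x$ (covolume $1$) and then interchanging limit and integral. That interchange is not free: $\widehat{\chi_{B_R}}$ is unbounded on $X$ and the required uniform integrability near the cusp needs an argument (e.g.\ via the Siegel transform of $|\cdot|$-level sets, or Schmidt's moment bounds), which is comparable in effort to tracking the three normalizations (covolume of $\Gamma$, $\mathrm{vol}(H/H_\Z)$, and the Jacobian of $G/H\cong\R^3\setminus\{0\}$) directly as Siegel does. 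So the ``cleaner alternative'' is a reformulation of the obstacle rather than a resolution. Aside from that missing normalization step, the sketch is sound, and the remark that $L^1$ membership follows by running the computation for $|\varphi|$ and splitting $\varphi=\varphi_+-\varphi_-$ is correct.
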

	
	Now, we begin our proof of Theorem \ref{large_entropy}. We first define the open subset $\Omega_{T,\varepsilon}\subset\R^3$ and consider its “tessellation”. The idea of such the tessellation is from \cite{BFG22}, but we use the different one.
	Since Theorem \ref{large_entropy} is trivial if $\alpha\in\Q$ or $\beta\in\Q$, we assume that $\alpha\in\R\setminus\Q$ and $\beta\in\R\setminus\Q$. For $T>1$ and $0<\varepsilon<1$, if we write
	\begin{equation*}
		\Omega_{T,\varepsilon}=\left\{(y,x_1,x_2)\in \R^3\left|\ 0<y<e^{2T},\ 0<|x_1|, |x_2|<\frac{1}{2}, \ y|x_1||x_2|<\varepsilon\right.\right\},
	\end{equation*}
	then it is immediately seen that
	\begin{equation}\label{number_siegel_trans}
		\left|\left\{ n\in\N \left|\ n<e^{2T}, n\langle n\alpha\rangle\langle n\beta\rangle<\varepsilon\right.\right\}\right|
		=\left|\Omega_{T,\varepsilon}\cap (\tau_{\alpha,\beta}\cdot\Z^3)\right|=\widehat{\chi_{\Omega_{T,\varepsilon}}}(\tau_{\alpha,\beta}\Gamma),
	\end{equation}
	where the right-hand side is the Siegel transform of the indicator function of $\Omega_{T,\varepsilon}$. (We notice that $0\notin\Omega_{T,\varepsilon}$.) Let
	\begin{equation*}
		\Delta_\varepsilon=\left\{(y,x_1,x_2)\in \R^3\left|\ 0<y<1,\ \frac{1}{2e}<|x_1|, |x_2|<\frac{1}{2}, \ y|x_1||x_2|<\varepsilon \right.\right\}.
	\end{equation*}
	Then, it can be seen that
	\begin{align*}
		&a_{m,n}^{-1}\Delta_\varepsilon\\
		=&\left\{(y,x_1,x_2)\in \R^3\left|\ 0<y<e^{m+n},\ \frac{e^{-m-1}}{2}<|x_1|<\frac{e^{-m}}{2},\ \frac{e^{-n-1}}{2}<|x_2|<\frac{e^{-n}}{2}, \ y|x_1||x_2|<\varepsilon \right.\right\}
	\end{align*}
	for any $(m,n)\in\Z_{\geq0}^2$, and hence
	\begin{equation*}
		\Omega_{T,\varepsilon}\supset\bigsqcup_{m,n\in\Z_{\geq0}, 0\leq m,n\leq T}a_{m,n}^{-1}\Delta_\varepsilon.
	\end{equation*}
	From this and (\ref{number_siegel_trans}), we obtain that
	\begin{align}\label{tessellation}
		\left|\left\{ n\in\N \left|\ n<e^{2T}, n\langle n\alpha\rangle\langle n\beta\rangle<\varepsilon\right.\right\}\right|
		=&\ \widehat{\chi_{\Omega_{T,\varepsilon}}}(\tau_{\alpha,\beta}\Gamma)\nonumber\\
		\geq&\ \widehat{\chi_{\bigsqcup a_{m,n}^{-1}\Delta_\varepsilon}}(\tau_{\alpha,\beta}\Gamma)\nonumber\\
		=&\sum_{m,n=0}^{\lfloor T\rfloor}\widehat{\chi_{a_{m,n}^{-1}\Delta_\varepsilon}}(\tau_{\alpha,\beta}\Gamma)\nonumber\\
		=&\sum_{m,n=0}^{\lfloor T\rfloor}\widehat{\chi_{\Delta_\varepsilon}}(a_{m,n}\tau_{\alpha,\beta}\Gamma).
	\end{align}
	
	We assume that $(\delta^{T_k}_{A^+,\tau_{\alpha,\beta}\Gamma})_{k=1}^\infty$ converges to an $A$-invariant finite Borel measure $\mu$ such that $p<\mu(X)\leq 1$ and $h_{\widehat{\mu}}(a_{1,0})>\gamma$. To prove Theorem \ref{large_entropy}, it is sufficient to show that, for any given subsequence $(k_l)_{l=1}^\infty$ of $\N$, there exists a subsequence $(k'_j)_{j=1}^\infty$ of $(k_l)_{l=1}^\infty$ such that
	\begin{equation}\label{aim_of_argument_1}
		\liminf_{j\to\infty}\frac{1}{T_{k'_j}^2}\left|\left\{ n\in\N \left|\ n<e^{2T_{k'_j}}, n\langle n\alpha\rangle\langle n\beta\rangle<\varepsilon\right.\right\}\right|\geq p\gamma\varepsilon.
	\end{equation}
	For a given subsequence $(k_l)_{l=1}^\infty$ of $\N$, we consider the sequence of empirical measures $(\delta^{N_{k_l}}_{\tau_{\alpha,\beta}\Gamma})_{l=1}^\infty$ of $\tau_{\alpha,\beta}\Gamma\in X$ with respect to the action of $\{a_{m,n}\}_{m,n\in\Z}$. That is, for each $l\in\N$, we define $N_{k_l}=\lfloor T_{k_l}\rfloor+1$ and a Borel probability measure $\delta^{N_{k_l}}_{\tau_{\alpha,\beta}\Gamma}$ on $X$ by
	\begin{equation*}
		\delta^{N_{k_l}}_{\tau_{\alpha,\beta}\Gamma}=\frac{1}{N_{k_l}^2}\sum_{m,n=0}^{N_{k_l}-1}\delta_{a_{m,n}\tau_{\alpha,\beta}\Gamma}.
	\end{equation*}
	Then, there exists a subsequence $(k'_j)_{j=1}^\infty$ of $(k_l)_{l=1}^\infty$ and a finite Borel measure $\mu'$ on $X$ such that $0\leq\mu'(X)\leq 1$ and $\delta^{N_{k'_j}}_{\tau_{\alpha,\beta}\Gamma}\to\mu'$ as $j\to\infty$ with respect to the weak* topology, that is
	\begin{equation}\label{weak*_conv}
		\int_X\varphi\ d\delta^{N_{k'_j}}_{\tau_{\alpha,\beta}\Gamma}=\frac{1}{N_{k'_j}^2}\sum_{m,n=0}^{N_{k'_j}-1}\varphi(a_{m,n}\tau_{\alpha,\beta}\Gamma)\to\int_X\varphi\ d\mu',\quad j\to\infty
	\end{equation}
	for any $\varphi\in C_0(X)$. From our assumption, we deduce the following lemma on $\mu'$.
	
	\begin{lem}\label{property_mu_dash}
		The finite Borel measure $\mu'$ is invariant under the action of $\{a_{m,n}\}_{m,n\in\Z}$. Furthermore, we have
		\begin{equation*}
			p<\mu'(X)\leq 1\quad {\text and}\quad h_{\widehat{\mu'}}(a_{1,0})>\gamma,
		\end{equation*}
		where $\widehat{\mu'}=\mu'(X)^{-1}\mu'$: the normalization of $\mu'$.
	\end{lem}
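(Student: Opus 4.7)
The plan is to establish a single structural identity relating $\mu$ and $\mu'$, namely
\[
\mu=\int_{[0,1]^2}(a_{u,v})_*\mu'\,du\,dv,
\]
and to read off all three assertions of the lemma from it.

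The $\{a_{m,n}\}_{m,n\in\Z}$-invariance of $\mu'$ is a standard Ces\`aro boundary argument. For $\varphi\in C_c(X)$ and $(m,n)\in\Z^2$, the sums $\sum_{i,j=0}^{N-1}\varphi(a_{i+m,j+n}\tau_{\alpha,\beta}\Gamma)$ and $\sum_{i,j=0}^{N-1}\varphi(a_{i,j}\tau_{\alpha,\beta}\Gamma)$ differ in at most $O((|m|+|n|)N)$ terms bounded by $\|\varphi\|_\infty$; dividing by $N_{k'_j}^2$ and letting $j\to\infty$ using (\ref{weak*_conv}) gives $\int\varphi\circ a_{m,n}\,d\mu'=\int\varphi\,d\mu'$.

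For the key identity, set $M=\lfloor T_{k'_j}\rfloor$, tile $[0,M]^2$ by unit squares, and apply the change of variables $(s,t)=(i+u,j+v)$ to obtain
\[
\frac{1}{M^2}\int_{[0,M]^2}\delta_{a_{s,t}\tau_{\alpha,\beta}\Gamma}\,ds\,dt=\int_{[0,1]^2}(a_{u,v})_*\Bigl(\tfrac{1}{M^2}\sum_{i,j=0}^{M-1}\delta_{a_{i,j}\tau_{\alpha,\beta}\Gamma}\Bigr)\,du\,dv.
\]
The residual strip $[0,T_{k'_j}]^2\setminus[0,M]^2$ has area at most $2T_{k'_j}$, so for any $\varphi\in C_c(X)$, Fubini gives
\[
\int\varphi\,d\delta^{T_{k'_j}}_{A^+,\tau_{\alpha,\beta}\Gamma}=\frac{M^2}{T_{k'_j}^2}\cdot\frac{1}{M^2}\sum_{i,j=0}^{M-1}\Phi_\varphi(a_{i,j}\tau_{\alpha,\beta}\Gamma)+O(T_{k'_j}^{-1}\|\varphi\|_\infty),
\]
where $\Phi_\varphi(y):=\int_{[0,1]^2}\varphi(a_{u,v}y)\,du\,dv$ lies in $C_c(X)$ (its support is contained in the continuous image of the compact set $[0,1]^2\times\supp\varphi$ under $(u,v,z)\mapsto a_{-u,-v}z$, and continuity comes from dominated convergence). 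The partial sum over $\{0,\dots,M-1\}^2$ differs from the one over $\{0,\dots,N_{k'_j}-1\}^2$ by $O(M)$ terms over a denominator of order $M^2$, so passing to the weak* limit $j\to\infty$, together with $M^2/T_{k'_j}^2\to 1$, yields $\int\varphi\,d\mu=\int\Phi_\varphi\,d\mu'$, which by Riesz representation is the claimed identity.

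The mass statement follows by taking $\varphi_n\in C_c(X)$ with $0\leq\varphi_n\uparrow 1$; monotone convergence on each side of $\int\varphi_n\,d\mu=\int\Phi_{\varphi_n}\,d\mu'$ gives $\mu(X)=\mu'(X)$, hence $p<\mu'(X)\leq 1$. For entropy, normalizing both sides by this common mass yields $\widehat\mu=\int_{[0,1]^2}(a_{u,v})_*\widehat{\mu'}\,du\,dv$. Because $A$ is abelian, each $(a_{u,v})_*\widehat{\mu'}$ is $a_{1,0}$-invariant and conjugate to $\widehat{\mu'}$ via the $a_{1,0}$-commuting homeomorphism $a_{u,v}$, so $h_{(a_{u,v})_*\widehat{\mu'}}(a_{1,0})=h_{\widehat{\mu'}}(a_{1,0})$. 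Affinity of Kolmogorov--Sinai entropy under integration of invariant probability measures then gives
\[
h_{\widehat{\mu}}(a_{1,0})=\int_{[0,1]^2}h_{(a_{u,v})_*\widehat{\mu'}}(a_{1,0})\,du\,dv=h_{\widehat{\mu'}}(a_{1,0})>\gamma.
\]
The main point requiring care is the derivation of the averaging identity (the Fubini swap and the boundary strip); the entropy equality is then an essentially formal consequence of the fact that $A$ is abelian together with affinity of entropy for integrals of invariant probability measures.
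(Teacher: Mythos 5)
Your proof is correct and follows essentially the same route as the paper: both establish the averaging identity $\mu=\int_{[0,1]^2}(a_{u,v})_*\mu'\,du\,dv$ by integrating the weak* convergence against $\Phi_\varphi(y)=\int_{[0,1]^2}\varphi(a_{u,v}y)\,du\,dv$ (the paper simply expands $\int\Phi_\varphi\,d\delta^{N_{k'_j}}_{\tau_{\alpha,\beta}\Gamma}$ into the integral over $[0,N_{k'_j}]^2$, which is your unit-square tiling in different notation), then read off the mass equality and conclude the entropy bound from affinity of Kolmogorov--Sinai entropy together with its invariance under the commuting pushforwards $a_{u,v}$. The only cosmetic difference is that the paper deduces $\mu'(X)=\mu(X)$ directly by evaluating the measure identity at $X$ (pushforward preserves total mass), rather than via the monotone sequence $\varphi_n\uparrow 1$.
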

	
	\begin{proof}
		Since $N_{k'_j}=\lfloor T_{k'_j}\rfloor+1$ and $T_{k'_j}\to\infty$ as $j\to\infty$, it can be easily seen that $\mu'$ is invariant under the action of $\{a_{m,n}\}_{m,n\in\Z}$.
		
		We take an arbitrary $\psi\in C_0(X)$ and apply (\ref{weak*_conv}) for $\varphi=\int_{[0,1]^2}\psi\circ a_{s,t}\ dsdt\in C_0(X)$. Then, we have
		\begin{align}\label{weak*_conv_integral}
			\int_X\varphi\ d\delta^{N_{k'_j}}_{\tau_{\alpha,\beta}\Gamma}=&\ \frac{1}{N_{k'_j}^2}\sum_{m,n=0}^{N_{k'_j}-1}\int_{[m,m+1]}\int_{[n,n+1]}\psi(a_{s,t}\tau_{\alpha,\beta}\Gamma)\ dsdt\nonumber\\
			=&\ \frac{1}{N_{k'_j}^2}\int_{[0,N_{k'_j}]^2}\psi(a_{s,t}\tau_{\alpha,\beta}\Gamma)\ dsdt\nonumber\\
			\to&\int_X\varphi\ d\mu'\\
			=&\int_X\int_{[0,1]^2}\psi(a_{s,t}x)\ dsdtd\mu'(x)=\int_{[0,1]^2}\int_X\psi(a_{s,t}x)\ d\mu'(x)dsdt,\quad j\to\infty.\nonumber
		\end{align}
		On the other hand, since $N_{k'_j}=\lfloor T_{k'_j}\rfloor+1$ and $T_{k'_j}\to\infty$ as $j\to\infty$, it follows from our assumption on $(T_k)_{k=1}^\infty$ that
		\begin{equation*}
			\lim_{j\to\infty}\frac{1}{N_{k'_j}^2}\int_{[0,N_{k'_j}]^2}\psi(a_{s,t}\tau_{\alpha,\beta}\Gamma)\ dsdt=\lim_{j\to\infty}\int_X\psi\ d\delta^{T_{k'_j}}_{A^+,\tau_{\alpha,\beta}\Gamma}=\int_X\psi\ d\mu.
		\end{equation*}
		From this and (\ref{weak*_conv_integral}), we obtain that $\int_{[0,1]^2}\int_X\psi\circ a_{s,t}\ d\mu'dsdt=\int_X\psi\ d\mu$ for any $\psi\in C_0(X)$, that is,
		\begin{equation*}
			\int_{[0,1]^2}{a_{s,t}}_*\mu'\ dsdt=\mu.
		\end{equation*}
		From this and $p<\mu(X)\leq 1$, we have
		\begin{equation*}
			\mu'(X)=\int_{[0,1]^2}{a_{s,t}}_*\mu'(X)\ dsdt=\mu(X)\in(p,1]
		\end{equation*}
		and, in particular, $\int_{[0,1]^2}{a_{s,t}}_*\widehat{\mu'}\ dsdt=\widehat{\mu}$. By applying \cite[Proposition 2.3]{Usu22} to this integral formula, we obtain
		\begin{equation*}
			\int_{[0,1]^2}h_{{a_{s,t}}_*\widehat{\mu'}}(a_{1,0})=h_{\widehat{\mu}}(a_{1,0}).
		\end{equation*}
		Here, we have, by the property of measure-theoretic entropy, $h_{{a_{s,t}}_*\widehat{\mu'}}(a_{1,0})=h_{\widehat{\mu'}}(a_{1,0})$ and, by the assumption, $h_{\widehat{\mu'}}(a_{1,0})>\gamma$. Hence, from the above equality, we have
		\begin{equation*}
			h_{\widehat{\mu'}}(a_{1,0})=h_{\widehat{\mu}}(a_{1,0})>\gamma,
		\end{equation*}
		and complete the proof of the lemma.
	\end{proof}
	We take the ergodic decomposition of $\widehat{\mu'}$ with respect to the action of $\{a_{m,n}\}_{m,n\in\Z}$ and apply \cite[Proposition 2.3]{Usu22} for measure-theoretic entropy of the action of $a_{1,0}$ to this decomposition. By Proposition \ref{measure_rigidity}, the Haar measure $m_X$ on $X$ is the only Borel probability measure on $X$ which is invariant and ergodic under the action of $\{a_{m,n}\}_{m,n\in\Z}$ such that $h_{m_X}(a_{1,0})>0$ and, it is known that $h_{m_X}(a_{1,0})=4$. (For example, see \cite[Proposition 9.6]{MT94} and \cite[Lemma 6.2]{EK03}.) From this fact and the properties of $\mu'$ stated in Lemma \ref{property_mu_dash}, it follows that, for any Borel measurable non-negative function $\varphi: X\to\R_{\geq0}$,
	\begin{equation}\label{estimate_mu_dash}
		\int_X\varphi\ d\mu'\geq\frac{p\gamma}{4}\int_X\varphi\ dm_X.
	\end{equation}
	
	Let us see (\ref{tessellation}) for $T=T_{k'_j}$. We have
	\begin{align*}
		\frac{1}{T_{k'_j}^2}\left|\left\{ n\in\N \left|\ n<e^{2T_{k'_j}}, n\langle n\alpha\rangle\langle n\beta\rangle<\varepsilon\right.\right\}\right|
		\geq&\ \frac{1}{T_{k'_j}^2}\sum_{m,n=0}^{N_{k'_j}-1}\widehat{\chi_{\Delta_\varepsilon}}(a_{m,n}\tau_{\alpha,\beta}\Gamma)\\
		\geq&\ \frac{1}{N_{k'_j}^2}\sum_{m,n=0}^{N_{k'_j}-1}\widehat{\chi_{\Delta_\varepsilon}}(a_{m,n}\tau_{\alpha,\beta}\Gamma)\\
		=&\int_X\widehat{\chi_{\Delta_\varepsilon}}\ d\delta^{N_{k'_j}}_{\tau_{\alpha,\beta}\Gamma},
	\end{align*}
	and hence,
	\begin{equation}\label{estimate_from_below}
		\liminf_{j\to\infty}\frac{1}{T_{k'_j}^2}\left|\left\{ n\in\N \left|\ n<e^{2T_{k'_j}}, n\langle n\alpha\rangle\langle n\beta\rangle<\varepsilon\right.\right\}\right|\geq\liminf_{j\to\infty}\int_X\widehat{\chi_{\Delta_\varepsilon}}\ d\delta^{N_{k'_j}}_{\tau_{\alpha,\beta}\Gamma}.
	\end{equation}
	We estimate the left-hand side from below. Since $\Delta_{\varepsilon}\subset\R^3$ is open and bounded, it can be seen that the Siegel transform $\widehat{\chi_{\Delta_\varepsilon}}: X\to\R_{\geq0}$ of the indicator function of $\Delta_\varepsilon$ is lower semicontinuous. Therefore, we can take an increasing sequence $(\varphi_l)_{l=1}^\infty$ of non-negative Borel measurable functions such that $\varphi_l(x)\nearrow\widehat{\chi_{\Delta_\varepsilon}}(x)$ as $l\to\infty$ for each $x\in X$, and each $\varphi_l$ is a finite sum of multiples of the indicator function of an open subset on $X$ and a positive constant. For example, we can take
	\begin{align*}
		\varphi_l(x)=&\sum_{i=1}^{l2^l}\frac{1}{2^l}\chi_{\left\{x\in X\left|\ \widehat{\chi_{\Delta_\varepsilon}}(x)>i/2^l\right.\right\}}(x)\\
		=&\begin{cases}
			0, \quad&\text{if }\ 0\leq\widehat{\chi_{\Delta_\varepsilon}}(x)\leq 1/2^l,\\
			i/2^l, \quad&\text{if }\ i/2^l<\widehat{\chi_{\Delta_\varepsilon}}(x)\leq (i+1)/2^l\ \text{ for }\  i=1,\dots,l2^l-1,\\
			l, \quad&\text{if }\ l<\widehat{\chi_{\Delta_\varepsilon}}(x).
		\end{cases}
	\end{align*}
	For each $l$, we have
	\begin{equation*}
		\liminf_{j\to\infty}\int_X\widehat{\chi_{\Delta_\varepsilon}}\ d\delta^{N_{k'_j}}_{\tau_{\alpha,\beta}\Gamma}\geq\liminf_{j\to\infty}\int_X\varphi_l\ d\delta^{N_{k'_j}}_{\tau_{\alpha,\beta}\Gamma}
	\end{equation*}
	and, since $\varphi_l$ is a finite sum of multiples of the indicator function of an open subset on $X$ and a positive constant, it follows from (\ref{weak*_conv}) and (\ref{estimate_mu_dash}) that
	\begin{equation*}
		\liminf_{j\to\infty}\int_X\varphi_l\ d\delta^{N_{k'_j}}_{\tau_{\alpha,\beta}\Gamma}\geq\int_X\varphi_l\ d\mu'\geq\frac{p\gamma}{4}\int_X\varphi_l\ dm_X.
	\end{equation*}
	Hence,
	\begin{equation*}
		\liminf_{j\to\infty}\int_X\widehat{\chi_{\Delta_\varepsilon}}\ d\delta^{N_{k'_j}}_{\tau_{\alpha,\beta}\Gamma}\geq\frac{p\gamma}{4}\int_X\varphi_l\ dm_X
	\end{equation*}
	holds for any $l$. If we take $l\to\infty$, then, by the monotone convergence theorem, we have
	\begin{equation}\label{lower_bound_Haar}
		\liminf_{j\to\infty}\int_X\widehat{\chi_{\Delta_\varepsilon}}\ d\delta^{N_{k'_j}}_{\tau_{\alpha,\beta}\Gamma}\geq\frac{p\gamma}{4}\int_X\widehat{\chi_{\Delta_\varepsilon}}\ dm_X.
	\end{equation}
	By Siegel integral formula (Proposition \ref{Siegel_integral_formula}), we have $\int_X\widehat{\chi_{\Delta_\varepsilon}}\ dm_X=m_{\R^3}(\Delta_\varepsilon)$ ($m_{\R^3}$ is the Lebesgue measure on $\R^3$) and, by standard calculation, it can be seen that $m_{\R^3}(\Delta_\varepsilon)=4\varepsilon$ for $0<\varepsilon<4^{-1}e^{-2}$. From this, (\ref{estimate_from_below}) and (\ref{lower_bound_Haar}), we finally obtain (\ref{aim_of_argument_1}) and complete the proof of Theorem \ref{large_entropy}.
	
	\begin{rem}\label{does_not_work_escape_of_mass}
		We notice that the similar argument does not work in the case of escape of mass (Theorem \ref{escape_of_mass}). In such a case, the accumulation measure $\mu'$ of the sequence of the empirical measures $(\delta^{N_{k'_j}}_{\tau_{\alpha,\beta}\Gamma})_{j=1}^\infty$ has much mass on the point at infinity. On the other hand, the Siegel transform $\widehat{\chi_{\Delta_\varepsilon}}$ we are considering here does not seem to have any continuity at the point at infinity, and “the value of $\widehat{\chi_{\Delta_\varepsilon}}$ at the point at infinity” does not seem to make sense.
		Hence, we can not obtain any meaningful estimate like (\ref{lower_bound_Haar}) in this case.
	\end{rem}
	
	\section{Proof of Theorem \ref{escape_of_mass}}
	
	In this section, we prove Theorem \ref{escape_of_mass}. As we noticed in Remark \ref{does_not_work_escape_of_mass}, we need a different method from that in Section \ref{proof_large_entropy_case}. Here, we start with the same setting as in the proof of Theorem 2.2 in \cite{Usu22}. For $0<\varepsilon<1/2$, we write $B^{\R^3}_\varepsilon=\left\{v\in\R^3\left| \|v\|_\infty<\varepsilon\right.\right\}$, where $\|v\|_\infty=\max\{|v_1|,|v_2|,|v_3|\}$ for $v= {^{t}(v_1,v_2,v_3)}\in\R^3$. We define a subset $X_\varepsilon$ of $X$ by
	\begin{equation*}
		X_\varepsilon=\left\{x\in X\left|\ x\cap \overline{B^{\R^3}_\varepsilon}\neq\{0\}\right.\right\}.
	\end{equation*}
	It can be seen that, by the similar argument as that in Section 2.3 of [Usu22], Theorem \ref{escape_of_mass} is deduced from the following theorem which is an improvement of Proposition 2.4 in \cite{Usu22}.
	
	\begin{thm}\label{key_theorem_escape_of_mass}
		For $\alpha,\beta\in\R\setminus\Q$, $0<\gamma<1$, $0<\varepsilon<1/2$ and $T>e$, we assume that
		\begin{equation}\label{assumption_key_theorem}
			\frac{1}{T^2}m_{\R^2}\left(\left\{(s,t)\in[0,T]^2\left|\ a_{s,t}\tau_{\alpha,\beta}\Gamma\in X_\varepsilon\right.\right\}\right)\geq\gamma,
		\end{equation}
		where $m_{\R^2}$ is the Lebesgue measure on $\R^2$. Then, we have
		\begin{equation*}
			\frac{(\log T)^2}{T^2}\left|\left\{n\in\N\left|\ n<e^{2T}, n\langle n\alpha\rangle\langle n\beta\rangle\leq\varepsilon^3\right.\right\}\right|\geq\frac{\gamma}{18}.
		\end{equation*}
	\end{thm}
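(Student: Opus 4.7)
Since $\alpha,\beta\in\R\setminus\Q$ and $\varepsilon<1/2$, for $(s,t)\in[0,T]^2$ the condition $a_{s,t}\tau_{\alpha,\beta}\Gamma\in X_\varepsilon$ is equivalent to the existence of a positive integer $n$ with
\[
n\le\varepsilon e^{s+t},\qquad \langle n\alpha\rangle\le\varepsilon e^{-s},\qquad \langle n\beta\rangle\le\varepsilon e^{-t}
\]
(the case $n=0$ is ruled out by $\varepsilon<1/2$, which forces $m_1=m_2=0$). Multiplying the three inequalities shows that any such $n$ satisfies $n<e^{2T}$ and $n\langle n\alpha\rangle\langle n\beta\rangle\le\varepsilon^3$, so $n$ lies in the set $S:=\{n\in\N : n<e^{2T},\ n\langle n\alpha\rangle\langle n\beta\rangle\le\varepsilon^3\}$ whose cardinality we want to bound from below. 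For each $n\in S$ define the ``witness region''
\[
R_n:=\bigl\{(s,t)\in[0,T]^2 : n\le\varepsilon e^{s+t},\ \langle n\alpha\rangle\le\varepsilon e^{-s},\ \langle n\beta\rangle\le\varepsilon e^{-t}\bigr\};
\]
then the escape set $E:=\{(s,t)\in[0,T]^2 : a_{s,t}\tau_{\alpha,\beta}\Gamma\in X_\varepsilon\}=\bigcup_{n\in S}R_n$, and hypothesis (\ref{assumption_key_theorem}) gives $m_{\R^2}(E)\ge\gamma T^2$.

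I would next tile $[0,T]^2$ by cells $Q_{i,j}$ of side $h=3\log T$, so that $h^2=9(\log T)^2$, and call $Q_{i,j}$ \emph{good} if $m_{\R^2}(Q_{i,j}\cap E)\ge\tfrac{\gamma}{2}h^2$. A direct pigeonhole estimate---comparing $m_{\R^2}(E)\ge\gamma T^2$ to the bound $m_{\R^2}(Q\cap E)\le h^2$ for good cells versus $m_{\R^2}(Q\cap E)<\tfrac{\gamma}{2}h^2$ for the remaining cells, whose total number is at most $T^2/h^2$---yields
\[
|\mathcal{G}|\ge\frac{\gamma T^2}{2h^2}=\frac{\gamma T^2}{18(\log T)^2}.
\]
It therefore suffices to construct an injection $\mathcal{G}\hookrightarrow S$.

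To build the injection, I would assign to each good cell $Q_{i,j}$ a witness $n(Q_{i,j})\in S$ whose (clipped) ``canonical position''
\[
(A_n,B_n):=\bigl(\min(T,\log(\varepsilon/\langle n\alpha\rangle)),\ \min(T,\log(\varepsilon/\langle n\beta\rangle))\bigr)
\]
lies in $Q_{i,j}$. Since $(A_n,B_n)$ belongs to a unique cell for each $n\in S$, such an assignment is automatically one-to-one, yielding the required bound $|S|\ge|\mathcal{G}|\ge\gamma T^2/(18(\log T)^2)$.

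\textbf{The main obstacle} is proving the existence of a canonically positioned witness in every good cell. An arbitrary escape point $(s_0,t_0)\in Q_{i,j}\cap E$ only provides some witness $n_0$ whose coordinates $\log(\varepsilon/\langle n_0\alpha\rangle),\log(\varepsilon/\langle n_0\beta\rangle)$ dominate $s_0,t_0$, but these may lie far beyond $Q_{i,j}$ when $n_0$ happens to be an unusually sharp simultaneous approximant of $(\alpha,\beta)$; in this regime the trivial estimate $\sum_{n\in S}m_{\R^2}(R_n)\ge m_{\R^2}(E)$ degenerates, since a single $R_n$ can have area of order $T^2$. Overcoming this will require a Diophantine descent---extracting, via Minkowski's theorem applied to the lattice $a_{s_0,t_0}\tau_{\alpha,\beta}\Z^3$ (or a successive-minima analysis of the same lattice after further $A^+$-translation toward $Q_{i,j}$), a new short vector corresponding to some $n\in S$ whose canonical position $(A_n,B_n)$ lies already inside $Q_{i,j}$. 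Carrying out this descent rigorously, and confirming that it succeeds in every good cell, is the crux of the proof.
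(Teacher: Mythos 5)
Your pigeonhole count of good cells is fine, but the proof has a genuine gap exactly where you locate it, and the gap is not just a technical step left to fill in: the claim that every good cell contains the canonical position $(A_n,B_n)$ of some witness $n\in S$ is, as far as one can see, simply not provable by a local descent, and is likely false as a general statement. The problematic regime is precisely the one driving the theorem: a single exceptionally good approximant $n$ whose triangle $R_n$ (the paper's $d_{\varepsilon,n}$) has area comparable to $T^2$ and covers a positive proportion of the cells. In that case the escape of mass in all of those cells is explained by the one vector ${}^t(n,m_1,m_2)$ and its integer multiples; Minkowski's theorem or a successive-minima analysis of $a_{s,t}\tau_{\alpha,\beta}\Z^3$ cannot manufacture a second, independent vector that is short at scale $\varepsilon$, so there need not exist any element of $S$ whose canonical position lies in a given good cell deep inside $R_n$. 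Even the multiples $kn$, which do lie in $S$, have $\langle kn\alpha\rangle\approx k\langle n\alpha\rangle$ and $\langle kn\beta\rangle\approx k\langle n\beta\rangle$, so their canonical positions are approximately $(A_n-\log k,\ B_n-\log k)$: they sweep out only a diagonal segment of logarithmic width, not the whole triangle, and your injection $\mathcal{G}\hookrightarrow S$ breaks down.

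The paper resolves this regime by a global dichotomy rather than a cell-by-cell injection. Fix $L>1$. Either $|\Lambda_{\alpha,\beta,T,\varepsilon}|\geq\gamma T^2/L$, or else, since the escape set is covered by the triangles $d_{\varepsilon,n}$ with $n\in\Lambda_{\alpha,\beta,T,\varepsilon}$ (at most one triangle per $n$ meeting $[0,T]^2$), some single triangle has $m_{\R^2}\left(d_{\varepsilon,n}\cap[0,T]^2\right)>L$, hence leg length $\lambda_{\varepsilon,n}>\sqrt{2}L^{1/2}$. The key point you are missing is the multiplicative amplification: for every $k\in\N$ with $k\leq e^{\sqrt{2}L^{1/2}/3}$ one has $kn<e^{2T}$ and
\begin{equation*}
kn\langle kn\alpha\rangle\langle kn\beta\rangle\leq k^3 n\langle n\alpha\rangle\langle n\beta\rangle<e^{\lambda_{\varepsilon,n}}\cdot n\cdot e^{-s_{\varepsilon,n}}\varepsilon\cdot e^{-t_{\varepsilon,n}}\varepsilon=\varepsilon^3,
\end{equation*}
so $|\Lambda_{\alpha,\beta,T,\varepsilon}|\geq e^{\sqrt{2}L^{1/2}/3}$, which for $L=18(\log T)^2$ equals $T^2$ and dominates $\gamma T^2/(18(\log T)^2)$. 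Thus the "bad" case for your injection is exactly the case where $|S|$ is enormous for a different reason, and the theorem follows from $\min\{\gamma T^2/L,\ e^{\sqrt{2}L^{1/2}/3}\}$ with that choice of $L$. Without an argument of this multiplicative type (or some substitute for the witness-in-cell claim), your proposal does not close.
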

	
	Hence, we prove Theorem \ref{key_theorem_escape_of_mass}. First, we see that this theorem is shown as a corollary of the following statement.
	
	\begin{thm}\label{key_theorem_escape_of_mass_2}
		For $\alpha,\beta\in\R\setminus\Q$, $0<\gamma<1$, $0<\varepsilon<1/2$ and $T>1$, we assume (\ref{assumption_key_theorem}). Then, for any $L>1$, we have
		\begin{equation*}
			\left|\left\{n\in\N\left|\ n<e^{2T}, n\langle n\alpha\rangle\langle n\beta\rangle\leq\varepsilon^3\right.\right\}\right|\geq \min\left\{\frac{\gamma T^2}{L}, e^{\sqrt{2}L^{1/2}/3}\right\}.
		\end{equation*}
	\end{thm}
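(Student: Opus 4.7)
The plan is to translate the measure-theoretic hypothesis into a combinatorial covering statement in $[0,T]^2$, obtain a sharp area estimate for each integer that can appear, and then split into a dichotomy.

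First I would note that whenever $a_{s,t}\tau_{\alpha,\beta}\Gamma\in X_\varepsilon$, the lattice $a_{s,t}\tau_{\alpha,\beta}\Z^3$ contains a nonzero vector in $\overline{B^{\R^3}_\varepsilon}$; since $\varepsilon<1/2$ forces the first coordinate to be nonzero, one extracts a positive integer $n$ satisfying $\langle n\alpha\rangle\leq\varepsilon e^{-s}$, $\langle n\beta\rangle\leq\varepsilon e^{-t}$ and $n\leq\varepsilon e^{s+t}$, so that $n<e^{2T}$ and $n\langle n\alpha\rangle\langle n\beta\rangle\leq\varepsilon^3$, i.e.\ $n$ belongs to $B_{\mathrm{bad}}:=\{n\in\N:n<e^{2T},\ n\langle n\alpha\rangle\langle n\beta\rangle\leq\varepsilon^3\}$. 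Writing $C_n=\log(n/\varepsilon)$, $A_n=\log(\varepsilon/\langle n\alpha\rangle)$, $B_n=\log(\varepsilon/\langle n\beta\rangle)$ and $D_n=A_n+B_n-C_n=\log\bigl(\varepsilon^3/(n\langle n\alpha\rangle\langle n\beta\rangle)\bigr)\geq 0$, I put
\[
\widetilde{E}_n=\{(s,t)\in[0,T]^2:s+t\geq C_n,\ s\leq A_n,\ t\leq B_n\},
\]
and the above shows that the assumed set of $(s,t)$ is covered by $\bigcup_{n\in B_{\mathrm{bad}}}\widetilde{E}_n$, so $\gamma T^2\leq\sum_{n\in B_{\mathrm{bad}}}|\widetilde{E}_n|$.

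A trapezoidal computation splitting by whether $C_n$ is $\leq\min(A_n,B_n)$, between them, or $\geq\max(A_n,B_n)$ (and using that capping at $T$ only decreases the area) yields the clean bound $|\widetilde{E}_n|\leq D_n^2/2$; the slack $D_n^2/2-|\widetilde{E}_n|$ is a sum of nonnegative squares in each case. Now I split the analysis. If $|\widetilde{E}_n|\leq L$ for every $n\in B_{\mathrm{bad}}$, then summation gives $|B_{\mathrm{bad}}|\geq\gamma T^2/L$ immediately. Otherwise there exists $n_*\in B_{\mathrm{bad}}$ with $|\widetilde{E}_{n_*}|>L$, whence $D_{n_*}>\sqrt{2L}$.

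In the second case I would produce at least $e^{\sqrt{2L}/3}$ multiples of $n_*$ in $B_{\mathrm{bad}}$. The universal inequality $\langle kn\alpha\rangle\leq k\langle n\alpha\rangle$ (take $-k[n\alpha]$ as the witness integer) yields
\[
kn_*\langle kn_*\alpha\rangle\langle kn_*\beta\rangle\leq k^3n_*\langle n_*\alpha\rangle\langle n_*\beta\rangle\leq k^3\varepsilon^3e^{-D_{n_*}}\leq\varepsilon^3
\]
for every $k\leq e^{D_{n_*}/3}$, so each $k\in\N$ with $k\leq\min(e^{D_{n_*}/3},e^{2T}/n_*)$ contributes a distinct $kn_*\in B_{\mathrm{bad}}$. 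The first factor is already $>e^{\sqrt{2L}/3}$; for the second I split on whether $C_{n_*}\leq T$. If yes, then $n_*\leq\varepsilon e^T$ and $e^{2T}/n_*\geq e^T\geq e^{\sqrt{2L}/3}$, the last step because the only non-trivial range is $L\leq\gamma T^2\leq T^2$ (otherwise $|B_{\mathrm{bad}}|\geq 1\geq\gamma T^2/L$ is trivial). If $C_{n_*}>T$, the area estimate sharpens to $|\widetilde{E}_{n_*}|\leq(2T-C_{n_*})^2/2$, which together with $|\widetilde{E}_{n_*}|>L$ forces $2T-C_{n_*}>\sqrt{2L}$ and hence $e^{2T}/n_*=e^{2T-C_{n_*}}/\varepsilon>e^{\sqrt{2L}}/\varepsilon\geq e^{\sqrt{2L}/3}$. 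The main obstacle I expect is the careful bookkeeping in the area bound $|\widetilde{E}_n|\leq D_n^2/2$ across all geometric sub-cases (including the $T$-truncation and the near-corner regime $C_n>T$), and the verification that the multiplicity bound does not degenerate in the saturated regime $n_*\approx\varepsilon e^{2T}$; these are of a computational flavour, but they are what determines the precise constants $\sqrt 2$ and $3$ in the exponent $e^{\sqrt{2}L^{1/2}/3}$.
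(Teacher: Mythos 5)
Your proof is correct and follows essentially the same route as the paper's: cover the set in (\ref{assumption_key_theorem}) by right triangles indexed by $n\in\left\{n<e^{2T}:\ n\langle n\alpha\rangle\langle n\beta\rangle\leq\varepsilon^3\right\}$, run the dichotomy on whether some triangle meets $[0,T]^2$ in area $>L$, and in that case harvest roughly $e^{\sqrt{2}L^{1/2}/3}$ multiples $kn_*$ of the distinguished integer $n_*$. The only real divergence is how the constraint $kn_*<e^{2T}$ is secured: the paper works with the leg length $\lambda_{\varepsilon,n}$ of the truncated triangle, measured at the maximal point $(s_{\varepsilon,n},t_{\varepsilon,n})\in[0,T]^2$, so that $s_{\varepsilon,n}+t_{\varepsilon,n}\leq 2T$ yields both $kn<e^{2T}$ and $kn\langle kn\alpha\rangle\langle kn\beta\rangle\leq\varepsilon^3$ in one stroke, whereas you use the untruncated $D_{n_*}$ and recover $kn_*<e^{2T}$ via the case split $C_{n_*}\leq T$ versus $C_{n_*}>T$ together with the reduction to the nontrivial range $L\leq\gamma T^2$ --- slightly longer bookkeeping but valid, and your definition of the triangles directly through $\langle n\alpha\rangle,\langle n\beta\rangle$ even lets you bypass the paper's appeal to the uniqueness of the vector ${}^t(n,m_1,m_2)$ for each $n$ (both arguments share the same harmless rounding slack between $e^{\sqrt{2}L^{1/2}/3}$ and the integer count of admissible $k$).
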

	
	\begin{proof}[Proof of Theorem \ref{key_theorem_escape_of_mass}]
		Assume that Theorem \ref{key_theorem_escape_of_mass_2} holds.
		Let $f: \R_{>e}\to\R_{>1}$ be an arbitrary increasing function such that $f(T)\to\infty$ as $T\to\infty$.
		For $\alpha,\beta\in\R\setminus\Q$, $0<\gamma<1$, $0<\varepsilon<1/2$ and $T>e$, we assume (\ref{assumption_key_theorem}). Then, by applying Theorem \ref{key_theorem_escape_of_mass_2} for $L=f(T)$, we have
		\begin{equation}\label{apply_key_theorem}
			\left|\left\{n\in\N\left|\ n<e^{2T}, n\langle n\alpha\rangle\langle n\beta\rangle\leq\varepsilon^3\right.\right\}\right|\geq \min\left\{\frac{\gamma T^2}{f(T)}, \exp\left(\frac{\sqrt{2}}{3}f(T)^{1/2}\right)\right\}.
		\end{equation}
		Here, we put $f(T)=18(\log T)^2$, then we have
		\begin{equation*}
			\exp\left(\frac{\sqrt{2}}{3}f(T)^{1/2}\right)=\exp(2\log T)=T^2>\frac{\gamma T^2}{18(\log T)^2}
		\end{equation*}
		for any $T>e$. Hence, it follows from (\ref{apply_key_theorem}) that
		\begin{equation*}
			\left|\left\{n\in\N\left|\ n<e^{2T}, n\langle n\alpha\rangle\langle n\beta\rangle\leq\varepsilon^3\right.\right\}\right|\geq\frac{\gamma T^2}{18(\log T)^2}
		\end{equation*}
		and we complete the proof.
	\end{proof}
	
	\begin{rem}
		From the proof of Theorem \ref{key_theorem_escape_of_mass} above, one can easily see that the extension of Theorem \ref{escape_of_mass} as stated in Remark \ref{extension_theorem_escape_of_mass} is possible.
	\end{rem}
	
	What is left is to show Theorem \ref{key_theorem_escape_of_mass_2}. In the rest of this section, we give a proof.
	We use the notions in the proof of Proposition 2.4 in \cite{Usu22}. Let $\alpha,\beta\in\R\setminus\Q$, $0<\gamma<1$, $0<\varepsilon<1/2$ and $T>1$ satisfy (\ref{assumption_key_theorem}). We write
	\begin{equation*}
		\Lambda_{\alpha,\beta,T,\varepsilon}=\left\{n\in\N\left|\ n<e^{2T}, n\langle n\alpha\rangle\langle n\beta\rangle\leq\varepsilon^3\right.\right\}.
	\end{equation*}
	For $\boldsymbol{n}\in\Z^3\setminus\{0\}$, we define
	$$
	d_{\varepsilon,\boldsymbol{n}}=\left\{(s,t)\in\R^2\left|\ \|a_{s,t}\tau_{\alpha,\beta}\boldsymbol{n}\|_\infty\leq\varepsilon\right.\right\}.
	$$
	If $d_{\varepsilon,\boldsymbol{n}}\cap\R_{\geq0}^2\neq \emptyset$
	for  $\boldsymbol{n}={}^t(n,m_1,m_2)\in\Z^3\setminus\{0\}$, then $n\neq0$,
	$$
	d_{\varepsilon,\boldsymbol{n}}=\left\{(s,t)\in\R^2\left|\ s\leq\log\frac{\varepsilon}{|n\alpha+m_1|},\ t\leq\log\frac{\varepsilon}{|n\beta+m_2|},\ s+t\geq\log\frac{|n|}{\varepsilon}\right.\right\}
	$$
	and this is the isosceles right triangle with the length of the leg $\log (\varepsilon^3/|n||n\alpha+m_1||n\beta+m_2|)$. We notice that, since $\alpha,\beta\in\R\setminus\Q$, we have $|n\alpha+m_1|,|n\beta+m_2|>0$. (We describe $d_{\varepsilon, \boldsymbol{n}}$ in Figure 1.)
	\begin{figure}[t]
		\centering
		\includegraphics[width=10cm]{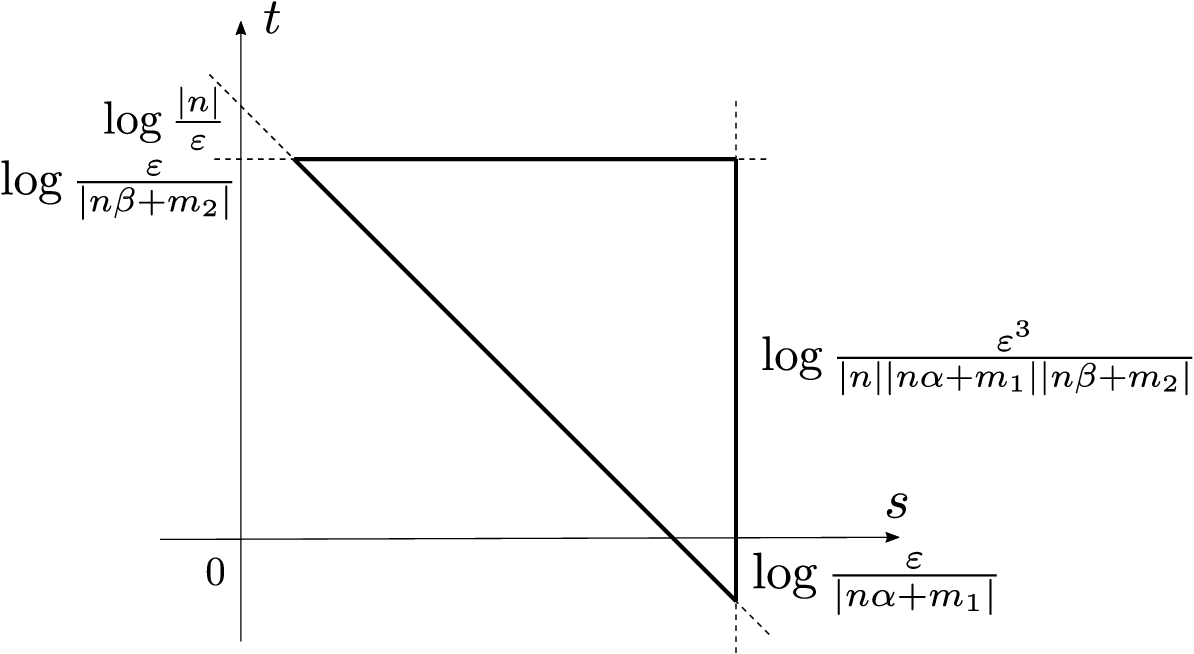}
		\caption{$d_{\varepsilon,\boldsymbol{n}}$}
	\end{figure}
	By the definition of $X_\varepsilon$, it follows that
	\begin{equation}\label{cover_by_d}
		\left\{(s,t)\in[0,T]^2\left|\ a_{s,t}\tau_{\alpha,\beta}\Gamma\in X_{\varepsilon}\right.\right\}=\bigcup_{\boldsymbol{n}\in\Z^3\setminus\{0\}}\left(d_{\varepsilon,\boldsymbol{n}}\cap[0,T]^2\right).
	\end{equation}
	Since $d_{\varepsilon,\boldsymbol{n}}=d_{\varepsilon,-\boldsymbol{n}}$ for each $\boldsymbol{n}\in\Z^3\setminus\{0\}$, we take the union of the right hand side only over $\boldsymbol{n}={}^t(n,m_1,m_1)\in\Z^3\setminus\{0\}$ such that $n>0$. Then, it can be easily seen that,
	for each $n\in\N$, the element $\boldsymbol{n}={}^t(n,m_1,m_2)$ of $\Z^3\setminus\{0\}$ such that
	$d_{\varepsilon, \boldsymbol{n}}\cap[0,T]^2\neq \emptyset$ is at most only one. (See \cite[Lemma 2.5]{Usu22}.)
	For each $n\in\N$, we write $d_{\varepsilon,n}=d_{\varepsilon,\boldsymbol{n}}$ if there exists $\boldsymbol{n}={}^t(n,m_1,m_2)\in\Z^3$ such that $d_{\varepsilon, \boldsymbol{n}}\cap[0,T]^2\neq \emptyset$,
	and $d_{\varepsilon,n}=\emptyset$ if not. Here, it is easily seen from the definition of $d_{\varepsilon, \boldsymbol{n}}$ that, if $d_{\varepsilon, \boldsymbol{n}}\cap[0,T]^2\neq \emptyset$, then we have $n\in\Lambda_{\alpha,\beta,T,\varepsilon}$.
	Therefore, we can write (\ref{cover_by_d}) as
	\begin{equation}\label{cover_by_dn}
		\left\{(s,t)\in[0,T]^2\left|\ a_{s,t}\tau_{\alpha,\beta}\Gamma\in X_{\varepsilon}\right.\right\}=\bigcup_{n\in\Lambda_{\alpha,\beta,T,\varepsilon}}\left(d_{\varepsilon,n}\cap[0,T]^2\right)
	\end{equation}
	and we notice that, for each $n\in\Lambda_{\alpha,\beta,T,\varepsilon}$, the isosceles right triangle $d_{\varepsilon,\boldsymbol{n}}$ with its hypotenuse on the line $\{(s,t)\in\R^2\left|s+t=\log(n/\varepsilon)\right.\}$ is at most only one. (We illustrate the situation in Figure 2.)
	\begin{figure}[h]
		\centering
		\includegraphics[width=12cm]{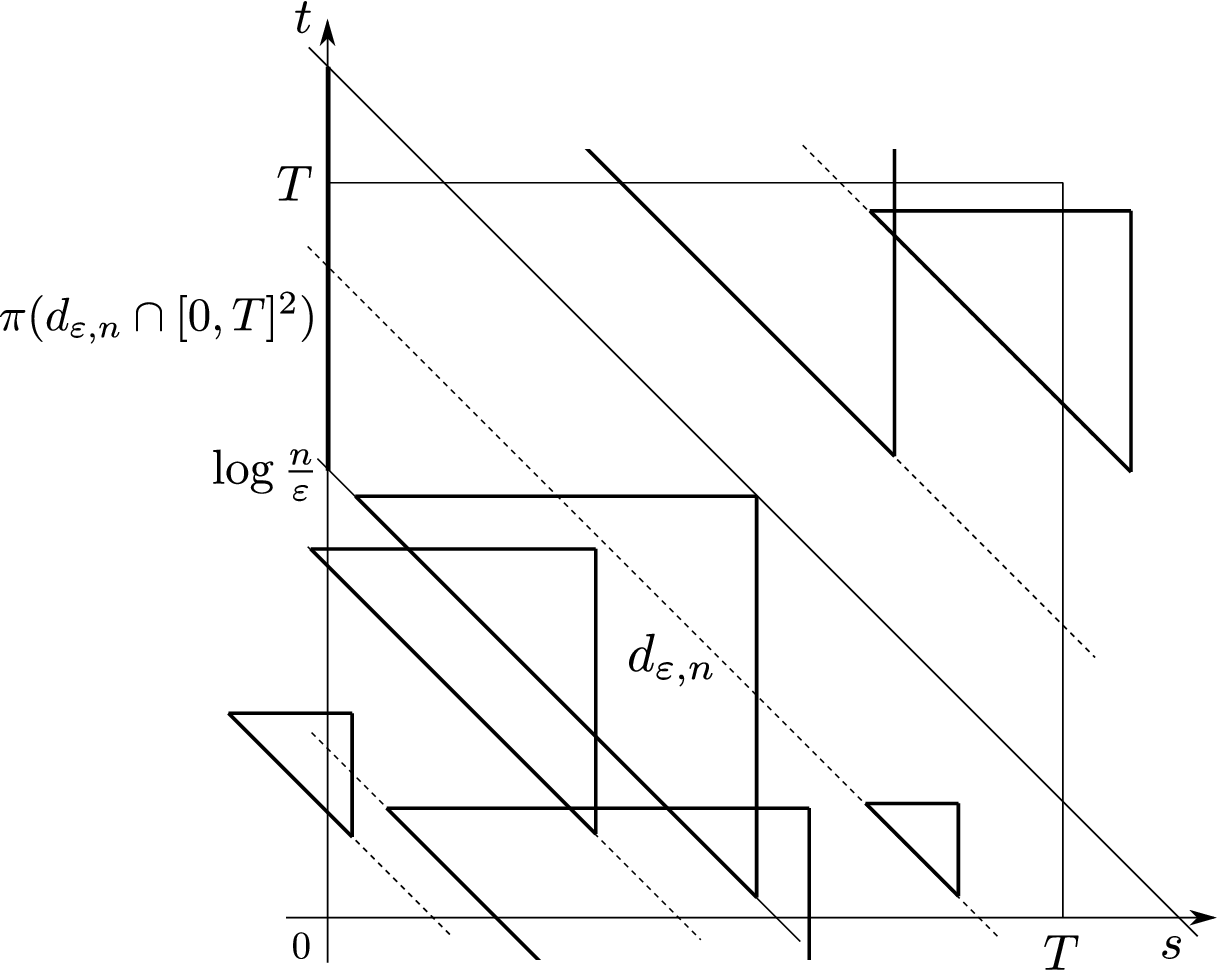}
		\caption{Intersections of $d_{\varepsilon,n} (n\in\Lambda_{\alpha,\beta,T,\varepsilon})$ with $[0,T]^2$ and its image under $\pi$}\label{cover_dn}
	\end{figure}
	
	We take an arbitrary $L>1$.
	To prove Theorem \ref{key_theorem_escape_of_mass_2}, we assume that
	\begin{equation}\label{assumption_proof_key_thm}
		\left|\Lambda_{\alpha,\beta,T,\varepsilon}\right|<\frac{\gamma T^2}{L}.
	\end{equation}
	Since it holds from (\ref{assumption_key_theorem}) and (\ref{cover_by_dn}) that
	\begin{equation*}
		\gamma T^2\leq m_{\R^2}\left(\left\{(s,t)\in[0,T]^2\left|\ a_{s,t}\tau_{\alpha,\beta}\Gamma\in X_\varepsilon\right.\right\}\right)
		\leq\sum_{n\in\Lambda_{\alpha,\beta,T,\varepsilon}}m_{\R^2}\left(d_{\varepsilon,n}\cap[0,T]^2\right),
	\end{equation*}
	under the assumption (\ref{assumption_proof_key_thm}), there exists some $n$ such that
	\begin{equation}\label{large_triangle}
		m_{\R^2}\left(d_{\varepsilon,n}\cap[0,T]^2\right)>L.
	\end{equation}
	We define $\pi: \R^2\to\R$ by $\pi(s,t)=s+t$. We take $(s_{\varepsilon,n},t_{\varepsilon,n})\in d_{\varepsilon,n}\cap[0,T]^2$ such that $s_{\varepsilon,n}+t_{\varepsilon,n}=\max\{s+t\left|\ (s,t)\in d_{\varepsilon,n}\cap[0,T]^2\right.\}$. Since $d_{\varepsilon,n}\cap[0,T]^2$ is an intersection of an isosceles right triangle and a square, we can see that $(s_{\varepsilon,n},t_{\varepsilon,n})\in d_{\varepsilon,n}\cap[0,T]^2$ as above is uniquely determined and
	\begin{equation*}
		\pi\left(d_{\varepsilon,n}\cap[0,T]^2\right)=\left[\log\frac{n}{\varepsilon},s_{\varepsilon,n}+t_{\varepsilon,n}\right].
	\end{equation*}
	(See Figure 2.) Furthermore, $d_{\varepsilon,n}\cap[0,T]^2$ is contained in the isosceles right triangle
	\begin{equation*}
		\widetilde{d_{\varepsilon,n}}=\left\{(s,t)\in\R^2\left|\ s\leq s_{\varepsilon,n}, t\leq t_{\varepsilon,n}, s+t\geq \log\frac{n}{\varepsilon}\right.\right\}.
	\end{equation*}
	Hence, if we write $\lambda_{\varepsilon,n}=s_{\varepsilon,n}+t_{\varepsilon,n}-\log(n/\varepsilon)$ for the length of the interval $\pi\left(d_{\varepsilon,n}\cap[0,T]^2\right)$, we have
	\begin{equation*}
		m_{\R^2}\left(d_{\varepsilon,n}\cap[0,T]^2\right)\leq m_{\R^2}(\widetilde{d_{\varepsilon,n}})=\frac{(\lambda_{\varepsilon,n})^2}{2}.
	\end{equation*}
	From this and (\ref{large_triangle}), we obtain that
	\begin{equation}\label{lambda_large}
		\lambda_{\varepsilon,n}>\sqrt{2}L^{1/2}.
	\end{equation}
	
	We take an arbitrary $k\in\N$ such that $k\leq e^{\sqrt{2}L^{1/2}/3}$. Since $(s_{\varepsilon,n},t_{\varepsilon,n})\in d_{\varepsilon,n}\cap[0,T]^2$, we have
	\begin{equation*}
		s_{\varepsilon,n}+t_{\varepsilon,n}\leq 2T,\quad \langle n\alpha\rangle\leq e^{-s_{\varepsilon,n}}\varepsilon,\quad \langle n\beta\rangle\leq e^{-t_{\varepsilon,n}}\varepsilon.
	\end{equation*}
	From these inequalities, (\ref{lambda_large}) and $\lambda_{\varepsilon,n}=s_{\varepsilon,n}+t_{\varepsilon,n}-\log(n/\varepsilon)$, it follows that
	\begin{equation*}
		kn\leq e^{\sqrt{2}L^{1/2}/3}n<e^{\lambda_{\varepsilon,n}}n=e^{s_{\varepsilon,n}+t_{\varepsilon,n}-\log(n/\varepsilon)}n<e^{2T}
	\end{equation*}
	and
	\begin{align*}
		kn\langle kn\alpha\rangle\langle kn\beta\rangle\leq&\ k^3n\langle n\alpha\rangle\langle n\beta\rangle\\
		\leq&\ e^{\sqrt{2}L^{1/2}}\cdot n\cdot e^{-s_{\varepsilon,n}}\varepsilon\cdot e^{-t_{\varepsilon,n}}\varepsilon\\
		<&\ e^{\lambda_{\varepsilon,n}}\cdot n\cdot e^{-s_{\varepsilon,n}}\varepsilon\cdot e^{-t_{\varepsilon,n}}\varepsilon\\
		=&\ \varepsilon^3.
	\end{align*}
	Therefore, $kn\in\Lambda_{\alpha,\beta,T,\varepsilon}$ and we obtain that
	\begin{equation}\label{conclusion}
		\left\{kn\left|\ k\in\N, k<e^{\sqrt{2}L^{1/2}/3}\right.\right\}\subset\Lambda_{\alpha,\beta,T,\varepsilon}.
	\end{equation}
	
	From the assumption (\ref{assumption_proof_key_thm}) and the conclusion (\ref{conclusion}) of the argument under this assumption, it follows that we have always
	\begin{equation*}
		\left|\Lambda_{\alpha,\beta,T,\varepsilon}\right|\geq \min\left\{\frac{\gamma T^2}{L}, e^{\sqrt{2}L^{1/2}/3}\right\},
	\end{equation*}
	and we complete the proof of Theorem \ref{key_theorem_escape_of_mass_2}.
	
	\appendix
	
	\section{Proof of Proposition \ref{measure_rigidity}}\label{proof_measure_rigidity}
	
	Here, we prove Proposition 2.1 in Section \ref{proof_large_entropy_case}. We first state the original result of measure rigidity for the full diagonal action under positive entropy condition by Einsiedler, Katok and Lindenstrauss, and deduce Proposition 2.1 from it.
	
	\begin{prop}{{\cite[Corollary 1.4]{EKL06}}}\label{measure_rigidity_EKL06}
		Let $\mu$ be an $A$-invariant and ergodic Borel probability measure on $X=\SL(3,\R)/\SL(3,\Z)$ such that $h_\mu(a)>0$ for some $a\in A\setminus\{e\}$. Then, we have $\mu=m_X$.
	\end{prop}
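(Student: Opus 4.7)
This is the deep measure rigidity theorem of Einsiedler, Katok and Lindenstrauss; its full proof is the substance of \cite{EKL06}. The plan is to follow the high--entropy/low--entropy method. First I would develop the theory of leafwise measures: for each root $\alpha$ of $A$ and the associated one-parameter unipotent subgroup $U_\alpha\subset G$, the $A$-invariant measure $\mu$ admits a system of conditional measures $\mu^{U_\alpha}_x$ on $U_\alpha$-orbits. By the Ledrappier--Young--Margulis--Tomanov entropy formalism, $h_\mu(a)$ decomposes as a sum, over the roots $\alpha$ positive with respect to $a$, of contributions that vanish exactly when the corresponding $\mu^{U_\alpha}_x$ is atomic $\mu$-a.e. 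Hence $h_\mu(a)>0$ forces $\mu^{U_\alpha}_x$ to be non-atomic on a positive measure set for at least one $\alpha$.

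The central step is the high entropy method of Einsiedler--Katok. In $\SL(3,\R)$ the root system is $A_2$, so there exist non-trivial bracket relations between the root subgroup $U_\alpha$ and some other root subgroup $U_\beta$ lying in the centralizer of $a$. Using $A$-invariance of $\mu$ together with the non-atomicity of $\mu^{U_\alpha}_x$, one constructs translations of $\mu^{U_\beta}_x$ that recombine, via the commutator map, to force the leafwise measure to be Haar on a one-parameter subgroup. This upgrades non-atomicity of a leafwise measure to genuine translation invariance of $\mu$ itself under some unipotent subgroup $U\subset G$.

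Once such unipotent invariance is in hand, Ratner's measure classification theorem applies: $\mu$ must be supported on a finite union of periodic orbits of a closed intermediate subgroup $U\subseteq H\subseteq G$. The additional $A$-invariance, $A$-ergodicity, and positivity of $h_\mu(a)$ leave essentially one option when one inspects the algebraic subgroup lattice of $\SL(3,\R)$ carrying a closed orbit in $\SL(3,\R)/\SL(3,\Z)$, namely $H=G$, forcing $\mu=m_X$.

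The main obstacle, and the genuinely deep point, is the high entropy step: the two-dimensional torus $A$ is essential, since the analogous rigidity fails for one-parameter diagonal actions (it is the rank-one Furstenberg $\times 2,\times 3$ problem). Any self-contained treatment therefore has to reproduce the full EKL machinery; for this paper the practical route, which the author adopts, is to cite \cite[Corollary~1.4]{EKL06} as a black box and use it to derive the needed variant Proposition~\ref{measure_rigidity}.
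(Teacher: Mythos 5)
The paper offers no proof of this proposition: it is quoted verbatim from \cite[Corollary 1.4]{EKL06} and used as a black box, which is exactly the route you settle on in your final paragraph, so your treatment coincides with the paper's. Your sketch of the EKL argument is a fair overview, though as stated it glosses two points (the result genuinely requires the low entropy method as well, since the high entropy mechanism you describe only operates when at least two non-commuting root directions carry non-atomic leafwise measures, and the auxiliary root subgroup need not lie in the centralizer of $a$); but since the paper itself cites the result without proof, no further comparison is needed.
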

	
	Now, we begin our proof of Proposition \ref{measure_rigidity}. Let $\mu$ be a Borel probability measure on $X=\SL(3,\R)/\SL(3,\Z)$ which is invariant and ergodic under the action of $\{a_{m,n}\}_{m,n\in\Z}$, and assume that $h_\mu(a_{m_0,n_0})>0$ for some $(m_0,n_0)\in\Z^2\setminus\{0\}$. Then, we define a Borel probability measure $\widetilde{\mu}$ on $X$ by
	\begin{equation}\label{definition_mu_tilde}
		\widetilde{\mu}=\int_{[0,1]^2}{a_{s,t}}_*\mu\ dsdt.
	\end{equation}
	Since $\mu$ is invariant and ergodic under the action of $\{a_{m,n}\}_{m,n\in\Z}$, it immediately follows that $\widetilde{\mu}$ is $A$-invariant and ergodic. Furthermore, by the property of measure-theoretic entropy, $h_{{a_{s,t}}_*\mu}(a_{m_0,n_0})=h_{\mu}(a_{m_0,n_0})$ holds for any $(s,t)\in[0,1]^2$, and hence, by applying \cite[Proposition 2.3]{Usu22} to (\ref{definition_mu_tilde}), we have
	\begin{equation*}
		h_{\widetilde{\mu}}(a_{m_0,n_0})=\int_{[0,1]^2}h_{{a_{s,t}}_*\mu}(a_{m_0,n_0})\ dsdt=h_\mu(a_{m_0,n_0})>0.
	\end{equation*}
	Hence, by Proposition \ref{measure_rigidity_EKL06}, $\widetilde{\mu}$ is the Haar measure $m_X$. Therefore, we have
	\begin{equation}\label{Haar_mu_int}
		m_X=\int_{[0,1]^2}{a_{s,t}}_*\mu\ dsdt.
	\end{equation}
	
	It is known that the Haar measure $m_X$ is ergodic under the action of $\{a_{m,n}\}_{m,n\in\Z}$. (For example, this can be shown in the same way as \cite[Corollary 11.19]{EW11}.) Since an ergodic measure can not be represented as a nontrivial convex combination of invariant measures, it follows that ${a_{s,t}}_*\mu=m_X$ for almost every $(s,t)\in\R^2$ with respect to the Lebesgue measure. Finally, by the $A$-invariance of $m_X$, we obtain $m_X=\mu$ and complete the proof.
	
	\subsection*{Acknowledgement} The author is grateful to Masayuki Asaoka and Mitsuhiro Shishikura for their helpful advice. This work was supported by JSPS KAKENHI Grant Number JP23KJ1211.


\begin{thebibliography}{99}
		\bibitem[BFG22]{BFG22}M. Björklund, R. Fregoli, A. Gorodnik, A uniform metrical theorem in multiplicative Diophantine approximation, {\it preprint}, arXiv: 2208. 11593, [v2] Mon, 20 Nov 2023.
		\bibitem[EK03]{EK03}M. Einsiedler, A. Katok, Invariant measures on $G/\Gamma$ for split simple Lie groups $G$, {\it Comm. Pure Appl. Math.} {\bf 56} (2003), no. 8, 1184-1221.
		\bibitem[EKL06]{EKL06}M. Einsiedler, A. Katok, E. Lindenstrauss, Invariant measures and the set of exceptions to Littlewood's conjecture, {\it Ann. of Math.} {\bf 164} (2006), 513-560.
		\bibitem[EW11]{EW11}M. Einsiedler, T, Ward, {\it Ergodic theory with a view toward number theory}, Springer-Verlag London, London, 2011.
		\bibitem[MT94]{MT94}G. A. Margulis, G. M. Tomanov, Invariant measures for actions of unipotent groups over local fields on homogeneous spaces, {\it Invent. Math.} {\bf 116} (1994), no. 1-3, 347-392.
		\bibitem[PVZZ22]{PVZZ22} A.D. Pollington, S. Velani, A. Zafeiropoulos and E. Zorin, Inhomogeneous Diophantine approximation on $M_0$-sets with restricted denominators, {\it Int. Math. Res. Not. IMRN} (2022), no. 11, 8571-8643.
		\bibitem[Sie45]{Sie45} C. L. Siegel, A mean value theorem in geometry of numbers, {\it Ann. of Math.} (2) {\bf 45} (1945), 340-347.
		\bibitem[Usu22]{Usu22} S. Usuki, On a lower bound of the number of integers in Littlewood's conjecture, {\it preprint}, arXiv: 2207.13462, [v2] Wed, 30 Nov 2022.
	\end{thebibliography}
\end{document}